\documentclass[12pt]{article}
\usepackage{geometry}\geometry{a4paper,tmargin=35mm,bmargin=25mm,lmargin=30mm,rmargin=30mm}
\usepackage[ansinew]{inputenc}
\usepackage{comment}
\usepackage{amssymb}
\usepackage{latexsym}
\usepackage{graphics}
%%%%%%%%%%%%%%
 \usepackage{tikz}
\usepackage{graphicx}
\usepackage{tikz-3dplot}
%%%%%%%%%%%%%
\usepackage{epstopdf}
\usepackage{xcolor}
\usepackage{fancybox}
\usepackage{amsmath,amsfonts}
%%%%%%%%%%%%%%%%%%%%%%%%%
 \usepackage[pagewise]{lineno}
 \usepackage{hyperref}
% \linenumbers 
\begin{document}

\setlength{\parskip}{0.3\baselineskip}

\newtheorem{theorem}{Theorem}
\newtheorem{corollary}[theorem]{Corollary}
\newtheorem{lemma}[theorem]{Lemma}
\newtheorem{proposition}[theorem]{Proposition}
\newtheorem{definition}[theorem]{Definition}
\newtheorem{remark}[theorem]{Remark}
\renewcommand{\thefootnote}{\alph{footnote}}
\newenvironment{proof}{\smallskip \noindent{\bf Proof}: }{\hfill $\Box$\hspace{1in} \medskip \\ }

%% COMANDOS FRECUENTES %

\newcommand{\beqaa}{\begin{eqnarray}}
\newcommand{\eeqaa}{\end{eqnarray}}
\newcommand{\beqae}{\begin{eqnarray*}}
\newcommand{\eeqae}{\end{eqnarray*}}

%%%%%%%%%%%%%%%

\newcommand{\sii}{\Leftrightarrow}
\newcommand{\imer}{\hookrightarrow}
\newcommand{\imerc}{\stackrel{c}{\hookrightarrow}}
\newcommand{\Con}{\longrightarrow}
\newcommand{\con}{\rightarrow}
\newcommand{\conf}{\rightharpoonup}
\newcommand{\confe}{\stackrel{*}{\rightharpoonup}}
\newcommand{\pbrack}[1]{\left( {#1} \right)}
\newcommand{\sbrack}[1]{\left[ {#1} \right]}
\newcommand{\key}[1]{\left\{ {#1} \right\}}
\newcommand{\dual}[2]{\langle{#1},{#2}\rangle}
\newcommand{\intO}[1]{\int_{\Omega}{#1}\, dx}

%%%%%%%%%%%%%%%%%%

\newcommand{\R}{{\mathbb R}}
\newcommand{\N}{{\mathbb N}}

\newcommand{\cred}[1]{\textcolor{red}{#1}}

%%%%%%%%%%%%%%%%

\title{\bf Regularity for the Timoshenko system  with fractional damping}
\author{Fredy Maglorio Sobrado  Su\'arez\thanks{Corresponding author.}\\
\small Department of Mathematics, Federal University of Technology  of Paran\'a, Brazil
}

\date{}
\maketitle

\let\thefootnote\relax\footnote{{\it Email address:} {\rm fredy@utfpr.edu.br} (Fredy M. Sobrado Suárez). Published on ago 28, 2023. In Journal of Engineering Research (ISSN 2764-1317). DOI: 10.22533/at.ed.3173292324083}
%%%%%%%%%%%%%%%%%%%%%%%%
\begin{abstract}	
We study the Regularity of the Timoshenko system with two fractional dampings $(-\Delta)^\tau u_t$ and $(-\Delta)^\sigma \psi_t$; both of the parameters $(\tau, \sigma)$ vary in the interval $[0,1]$.    We note that  ($\tau=0$ or $\sigma=0$) and ($\tau=1$ or $\sigma=1$) the dampings are called frictional and viscous,  respectively.  Our main contribution is to show that the corresponding semigroup $S(t)=e^{\mathcal{B}t}$,  is analytic for $(\tau,\sigma)\in R_A:=[1/2,1]\times[ 1/2,1]$ and determine the Gevrey's class $\nu>\dfrac{1}{\phi}$, where $\phi=\left\{\begin{array}{ccc}
\dfrac{2\sigma}{\sigma+1} &{\rm for} & \sigma\leq \tau,\\\\
\dfrac{2\tau}{\tau+1} &{\rm for} &  \tau\leq \sigma.
\end{array}\right.$ \quad and \quad   $(\tau,\sigma)\in R_{CG}:= (0,1)^2$.
\end{abstract}

\bigskip
{\sc Key words and phrases:} Gevrey class,  Analyticity,   Fractional Damping, Semigroup Theory, Timoshenko System.

%%%%%%%%%%%%%%%%%%%%%%%%%
\setcounter{equation}{0}

\section{Introduction}

In this paper we study the regularity  properties of the  Timoshenko system  \cite{Timoshenko-1937}:
\begin{eqnarray*}
\rho {\bf A} u_{tt}-S_x=0&= & 0, \quad  x\in (0,L), \;t\in \mathbb{R}^+ ,\\
\rho I\psi_{tt}-M_x+S&= & 0, \quad  x\in (0,L), \;t\in \mathbb{R}^+.
\end{eqnarray*}
The constant $\rho$ denotes the density,   $ {\bf A}$ the cross-sectional area, and $I$ the area moment of inertia.  By $S$ we denote the shear force, and $M$ is the bending moment.  The function $u$ is the transverse displacement and $\psi$  is the rotation angle of a filament of the beam.  Here,  $t$ is the time variable, and $x$ is the space coordinate along the beam.  The constitutive  laws we use are the following:
\begin{equation*}
S= k {\bf A} G(u_x+\psi)\qquad {\rm and}\qquad M=EI\psi_x,
\end{equation*}
$E$ and $G$ are elastic constants, $k$  the shear coefficient for measuring the stiffness of materials ($k < 1$).

To simplify the notation let us denote by $\rho_1 =\rho  {\bf A}$,  $ \rho_2= \rho I$,  $\kappa = k {\bf A}G$,  $b = EI$ and taking  boundary conditions of type Dirichlet--Dirichlet
\begin{equation}\label{Eq08A}
u(t,0)=u(t,L)=0, \qquad \psi(t,0)=\psi(t,L)=0, \qquad  t>0, 
\end{equation}
using  the operator: $A:D(A)\subset L^2(0,L)\con L^2(0,L)$, where
\begin{equation}\label{Omenoslaplaciano}
A=-\Delta,\quad D(A)=H^2(0,L)\cap H^1_0(0,L).
\end{equation}
The system  in abstract form, is  rewrite, as
\begin{eqnarray}
\label{Eq05A}
\rho_1 u_{tt}-\kappa(u_x+\psi)_x+A^\tau u_t&= & 0, \quad  x\in (0,L), \;t\in \mathbb{R}^+ ,\\
\label{Eq06A}
\rho_2\psi_{tt}+bA\psi+\kappa(u_x+\psi)+A^\sigma \psi_t&=& 0, \quad  x\in (0,L), \;t\in \mathbb{R}^+.
\end{eqnarray}
and  initial conditions
\begin{equation}\label{Eq07A}
u(0,\cdot)=u_0, \quad u_t(0,\cdot)=u_1,\quad \psi(0,\cdot)=\psi_0\quad{\rm and}\quad \psi_t(0,\cdot)=\psi_1.
\end{equation}
 
In this paper we study the regularity of the $(u,\psi)$ solutions of the system \eqref{Eq05A}--\eqref{Eq07A}, 
where both parameters $\tau$ and $\sigma$ take values in the range $[0,1]$. 
It is known that the  this operator given in \eqref{Omenoslaplaciano} is selfadjoint, positive and has inverse compact on a complex Hilbert space $D(A^0)=L^2(0,L)$. Therefore,  the operator $A^{\theta}$ is self-adjoint positive for  all  $\theta\in\mathbb{R}$,  bounded for $\theta\leq 0$,  and  the embedding
\begin{eqnarray*}
D(A^{\theta_1})\hookrightarrow D(A^{\theta_2}),
\end{eqnarray*}
is continuous for $\theta_1>\theta_2$. Here, the norm in $D(A^{\theta})$ is given by $\|u\|_{D(A^{\theta})}:=\|A^{\theta}u\|$, $u\in D(A^{\theta})$, where $\|\cdot\|$ denotes the norm in the Hilbert space $\mathbb{X}$. Some of these spaces are: $D(A^{1/2})=H_0^1(0,L)$ and $D(A^{-1/2})=H^{-1}(0,L)$. 
%%%%%%%%%%%%%%%%%%%%

%%%%%%%%%%%%%%%%%%%%
During the past several decades, many authors have studied some physical phenomena for the Timoshenko system formulated into different mathematical models.  Let us mention some results of this Timoshenko's system. 

Kim and Renardy have already considered the case of two boundary control forces \cite{KR1987}(1987) for the Timoshenko beam. They proved the exponential decay of the energy by using a multiplier technique and provided numerical estimates of the eigenvalues of the operator associated with this system.  Shi and Feng \cite{SF2001}(2001) established the exponential decay of the energy with locally distributed feedback (two feedbacks).  Fatori et al. \cite{LFJMRR2014}(2014) studied the  thermoelastic beam system when the oscillations are defined by   Timoshenko's model    and the heat conduction is given by Green and Naghdi theories.  They showed that the corresponding semigroup is exponentially stable if and only if the wave velocities associated with the hyperbolic part of the system are equal.   In the case of lack of exponential stability, they show that the solution decays polynomially and that the  decay rate is optimal. Ammar-Khodja et al. \cite{AKABJEMRivera2003}(2003)  showed exponential decay when exponential kernels are considered,  while polynomial kernels are shown to lead to polynomial decay and prove that the polynomial rates found are optimal.

The primary motivation for choosing the Timoshenko model to study regularity (Gevrey class and Analyticity) came from two works. The first of the study of exponential decay in the Timoshenko system provided with two weak (frictional) dampings, published in Raposo et al. \cite{RFMN2005}(2005), whose model is given by:
\begin{eqnarray*}
\rho_1u_{tt}-k(u_x-\psi)_x+u_t=0\quad {\rm in}\quad (0,L)\times (0,\infty),\\
\rho_2\psi_{tt}-b\psi_{xx}+k(u_x-\psi)+\psi_t=0,\quad{\rm in}\quad (0,L)\times (0,\infty),\\
u(0,t)=u(L,t)=\psi(0,t)=\psi(L,t)=0\qquad t>0.
\end{eqnarray*}
The second most recent work was from 2016 \cite{AMJaime2016}, the authors 
considering  the stress-strain constitutive law is of Kelvin-Voigt type, given by
\begin{equation*}
S=k {\bf A} G(u_x+\psi)+\gamma_1(u_x+\psi)_t,\qquad M=EI\psi_x+\gamma_1\psi_{xt}.
\end{equation*}
Having the $\gamma_1, \gamma_2>0$,  they demonstrated the analyticity of the semigroup associated with the model. Already if one of the coefficients $\gamma_1$, $\gamma_2$ vanishes, then $S(t)$ is not exponentially stable but decays polynomially to zero, and  the polynomial decay rate is optimal. The studied system was:
\begin{eqnarray*}
\rho_1u_{tt}-k(u_x-\psi)_x-\gamma_1(u_x+\psi)_{xt}=0\quad {\rm in}\quad (0,L)\times (0,\infty),\\
\rho_2\psi_{tt}-b\psi_{xx}+k(u_x-\psi)+\gamma_1(u_x+\psi)_t-\gamma_2\psi_{xxt}=0,\quad{\rm in}\quad (0,L)\times (0,\infty),\\
u(0,t)=u(L,t)=\psi_x(0,t)=\psi_x(L,t)=0\qquad t>0.
\end{eqnarray*}

Both works applied a characterization of the theory of semigroups, together with  spectral analysis, both for the existence and for the exponential stability and analyticity of the semigroups. This theory of asymptotic behavior and regularity (existence of Gevrey class and nalyticity), was initially presented by Gearhart. \cite{Gearhart}(1978), Pr\"us\cite{Pruss}(1984) and later published in the book by Liu -Zheng( Theorem 1.3.2 )\cite{LiuZ}(1999).

The model investigated here is relevant from the mathematical point of view; it also has great importance in other sciences, such as mechanics. More precisely, the physical meaning of the presence of the two fractional dampings will guarantee that the beams that undergo small deformations can be quickly stabilized (with an exponential rate) so that the system with two dampings has at least Gevrey class solutions, the same as both. Damping is weak (frictional damping) and acts naturally, both on the small transverse vibrations $u$ and the beam filaments' angle of rotation $\psi$.

The Gevrey class semigroup has more regular properties than a differentiable semigroup,  but is less regular than an analytic semigroup. The Gevrey rate $\frac{1}{\nu} > 1$ `measures'  the divergence degree of its power series. It should be noted that the Gevrey class or analyticity of the model implies three essential  properties. The first is the property of the smoothing effect on the initial data; that is, no matter how irregular the initial data. The solutions of the model are very smooth in a finite time. The second property is that systems are exponentially stable.  Finally,  the systems enjoy the property of linear stability, which means that the type of the semigroup equals the spectral limit of its infinitesimal operator.

About investigations of the regularity of semigroups associated with various mathematical models, we can cite, for example,   Fatori et al.\cite{LMJAIME2012}(2012); in that work, the authors study the differentiability,  analyticity of the associated semigroup and also determine the optimum rate of decay.   More recently   published works explore the regularity of solutions using  Gevrey's class introduced in the  thesis of Taylor \cite{TaylorM}(1989). Regarding analyticity.  In the same book by Liu-Zheng, they also presented a theorem for analyticity. Among these works, we can mention Hao et al.\cite{Hao-2015}(2015).  Recently in 2023, the work \cite{FSLS2023} was published, in which the authors studied the abstract plate system:
\begin{equation*}
\left\{\begin{array}{c}
u_{tt}+\omega Au_{tt}+A^2u- A^\sigma\theta=0,\\
\theta_t+A\theta+A^\sigma u_t=0.
\end{array}\right.
\end{equation*}
Where the operator $A^\tau$ is selfadjoint and positive for all $\tau\in \mathbb{R}$ and  $\omega\geq 0$, for case $\omega=0$ and $A=-\Delta$, we have the thermoelastic plate of Euler-Bernoulli. For $\omega>0$ and $A=-\Delta$, we have the thermoelastic plate of Kirchhoff-Love; in this work is, determine the Gevrey sharp classes: for $\omega=0$; $s_{01}>\frac{1}{2\sigma-1}$ and $s_{02}>\sigma$, when $\sigma\in (1/2, 1)$ and $\sigma\in(1,3/2)$ respectively. Besides for $\omega>0$ we have $s_w>\frac{1}{4(\sigma-1)}$ when $\sigma\in(1,5/4)$. That work contains direct proofs of the analyticity of $S(t)$: In the case $\omega=0$,  we have analyticity for $\sigma=1$ and for the case $\omega>0$, $S(t)$ is analytic for $\sigma\in[5/4,3/2]$. In the same direction, we can cite recent research \cite{KAFSTebou2021, FMFSSB2023, GAJMRLiu2021}.

In the last decade, various investigations have emerged to study the stability and regularity of models with fractional damping. In this direction, we have, for example, the work of Sare et al.\cite{HSLiuRacke2019}; in that paper, the authors investigate thermoelastic-type coupled systems, where they address two cases with Fourier's heat law and the other with Cattaneo considering em both cases the rotational inertial term, study the exponential stability, possible regions of loss of exponential stability and polynomial stability, and, more recently, the work of Keyantuo et al.\cite{Tebou-2020}(2020) to be published. In this last work,  the authors studied the thermoelastic plate model with a fractional Laplacian between the Euler-Bernoulli and Kirchhoff model with two types of boundary conditions; in addition to studying the asymptotic and analytical behavior, the authors show that the underlying semigroups is of Gevrey's class $\delta$ for every $\delta>\frac{2-\theta}{2-4\theta}$ for  both the clamped and hinged boundary conditions when the parameter $\theta$ lies in the interval $(0,1/2)$.  At the same address, we can cite the investigations: \cite{RCRaposo2022,KZLZHugo2021,ZKLiuTebou2022,FSLS2023Book,BrunaJMR2022, Tebou-2021}. 

This paper is organized as follows. In section 2, we study the well-posedness of the system \eqref{Eq05A}-\eqref{Eq07A} through the semigroup theory.   We divide section 3 into two parts, subsection 3.1, devoted to the Gevrey class is, showing that the semigroup associated with the Tymoshenko system has a Gevrey class $\nu>\dfrac{1}{\phi}$, where $\phi=\left\{\begin{array}{ccc}
\dfrac{2\sigma}{\sigma+1} &{\rm for} & \sigma\leq \tau,\\\\
\dfrac{2\tau}{\tau+1} &{\rm for} & \tau\leq \sigma.
\end{array}\right.$ and    for the parameters $(\tau,\sigma)$ within the region $R_ {GC}$ and finally, in section 3.2, we address the analyticity of the semigroup associated with the system $S(t)=e^{\mathcal{B}t}$, we show that $S(t)$ is analytic when both two parameters $\tau$ and $\sigma $ take values in the interval $[1/2,1]$.
%%%%%%%%%%%%%%%%%%%%%%%%%%%%%%%%%%%%%%%%%%%%%%%%%%%%%%%%%%%%%%%%%%%

\section{Well-Posedness of the System}

%%%%%%%%%%%%%%%%%%%%
We will use a semigroup approach to show existence uniqueness of  strong solutions for  laminated beams with  fractional  damping,  taking $w=u_t$, $v=\psi_t$,   and  considering $U=(u,w,\psi,v)$ and $U_0=(u_0,u_1,\psi_0, \psi_1)$,  the system \eqref{Eq05A}--\eqref{Eq07A},  can be written as an abstract Cauchy problem
\begin{equation}\label{Fabstrata}
    \frac{d}{dt}U(t)=\mathcal{B} U(t),\quad    U(0)=U_0,
\end{equation}
 where the operator $\mathcal{B}$ is given by
  \begin{equation} \label{operadorB}
 \mathcal{B}U:=\bigg (
 w,\;
 \frac{1}{\rho_1}[\kappa(u_x+\psi)_x- A^\tau w],\;
 v,  \; \frac{1}{\rho_2}[-bA\psi-\kappa(u_x+\psi)-A^\sigma v] \bigg),
  \end{equation}
for $U=(u,w,\psi, v)$.  This operator will be defined in a suitable subspace of the phase space obtained by performing duality products of the invariants $u_t$ and $\psi_t$ with the equations \eqref{Eq05A} and \eqref{Eq06A}, and using properties of the operators $A^\theta$ for $\theta\in \mathbb{R}$, defined by
$$              \begin{array}{ll}
               \mathcal{H}:= [D(A^\frac{1}{2})\times  D(A^0)]^2.
              \end{array}
$$

It's  a Hilbert space with the inner product
\begin{eqnarray*}
\langle U, U^* \rangle_ \mathcal{H}  &:= &\rho_1 \dual{w}{w^*}+\rho_2 \dual{v}{v^*}+ \kappa\dual{u_x+\psi} {u_x^*+\psi^*}+b\dual{\psi_x} {\psi_x^*},
\end{eqnarray*}
for $U=(u,w,\psi, v), U^*=(u^*,w^*,\psi^*, v^*)\in \mathcal{H}$, 
and induced norm
\begin{eqnarray*}
\|U\|^2_ \mathcal{H}&:= &\rho_1 \|w\|^2+\rho_2 \| v\|^2+ \kappa\|u_x+\psi\|^2+b\|A^\frac{1}{2}\psi\|^2.
\end{eqnarray*}

 In these conditions,  we define the domain of $\mathcal{B}$ as
\begin{equation}\label{dominioB}
    \mathcal{D}(\mathcal{B}):=D(A)\times \big(D(A^\frac{1}{2})\cap D(A^\tau)\big) \times D(A)\times \big(D(A^\frac{1}{2})\cap D(A^\sigma)\big).
    \end{equation}
   
To show that the operator $\mathcal{B}$ is the generator of a $C_0\!-\!{\rm semigroup}$  we invoke a result from Liu-Zheng' book.
\begin{theorem}[see Theorem 1.2.4 in \cite{LiuZ}] \label{TLiuZ}
Let $ \mathcal{B}$ be a linear operator with domain $\mathcal{D}(\mathcal{B})$ dense in a Hilbert space $\mathcal{H}$. If $ \mathcal{B}$ is dissipative and $0\in\rho( \mathcal{A})$, the resolvent set of $ \mathcal{B}$, then $ \mathcal{B}$ is the generator of a $C_0\!-\!{\rm semigroup}$ of contractions on $\mathcal{H}$.
\end{theorem}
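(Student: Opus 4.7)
The statement is a classical corollary of the Lumer--Phillips theorem, which guarantees that a densely defined dissipative operator on a Hilbert space generates a $C_0$-semigroup of contractions provided the range condition $\operatorname{Ran}(\lambda_0 I - \mathcal{B}) = \mathcal{H}$ holds for some $\lambda_0 > 0$. The density of $D(\mathcal{B})$ and the dissipativity of $\mathcal{B}$ are already given by hypothesis, so the entire task reduces to extracting this range condition from the assumption $0 \in \rho(\mathcal{B})$.

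The key step is to use that $\rho(\mathcal{B})$ is open in $\mathbb{C}$: since $0 \in \rho(\mathcal{B})$, an open disk around the origin lies inside $\rho(\mathcal{B})$, so some $\lambda_0 > 0$ belongs to $\rho(\mathcal{B})$ and therefore $\lambda_0 I - \mathcal{B}: D(\mathcal{B}) \to \mathcal{H}$ is a bijection. A more hands-on way to reach the same conclusion is to observe that $\mathcal{B}^{-1} \in \mathcal{L}(\mathcal{H})$ is bounded by hypothesis, so for any $\lambda_0$ with $0 < \lambda_0 < \|\mathcal{B}^{-1}\|^{-1}$ the Neumann series shows $I - \lambda_0 \mathcal{B}^{-1}$ is invertible on $\mathcal{H}$; the factorization
\[
\lambda_0 I - \mathcal{B} \;=\; -\mathcal{B}\,(I - \lambda_0 \mathcal{B}^{-1}) \qquad \text{on } D(\mathcal{B})
\]
then produces the required bijection directly. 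Either route yields $\operatorname{Ran}(\lambda_0 I - \mathcal{B}) = \mathcal{H}$ for this $\lambda_0$.

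With density, dissipativity, and the range condition in hand, Lumer--Phillips immediately delivers that $\mathcal{B}$ is the infinitesimal generator of a $C_0$-semigroup of contractions on $\mathcal{H}$. The one point worth double-checking is closedness of $\mathcal{B}$, but this is automatic: any operator with nonempty resolvent set is closed. Consequently there is no real obstacle in the argument; the theorem is essentially a one-line consequence of Lumer--Phillips once one notices that $0 \in \rho(\mathcal{B})$ forces a small positive real number into $\rho(\mathcal{B})$ as well.
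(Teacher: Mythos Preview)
Your proof is correct and is the standard route to this corollary of Lumer--Phillips. Note, however, that the paper does not actually prove this theorem: it is quoted as Theorem 1.2.4 from Liu--Zheng's book and used as a black box, so there is no ``paper's own proof'' to compare against; your argument simply supplies the proof one would find in a standard reference.
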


Let us see that the operator $\mathcal{B}$ in \eqref{operadorB}  satisfies the conditions of this theorem. Clearly, we see that $\mathcal{D}(\mathcal{B})$ is dense in $\mathcal{H}$. Effecting the internal product of $\mathcal{B}U$ with $U$, we have
\begin{equation}\label{eqdissipative}
\text{Re}\dual{\mathcal{B}U}{U}=  -\dfrac{1}{\rho_1}\|A^\frac{\tau}{2}w\|^2-\dfrac{1}{\rho_2}\|A^\frac{\sigma}{2} v\|^2, \quad\forall\ U\in \mathcal{D}(\mathcal{B}),
\end{equation}
that is, the operator $\mathcal{B}$ is dissipative.

%%%%%%%%%%%%%%%%%%
To complete the conditions of the above theorem, it remains to show that $0\in\rho(\mathcal{B})$. Let $F=(f^1,f^2,f^3,f^4)\in \mathcal{H}$, let us see that the stationary problem $ \mathcal{B}U=F$ has a solution $U=(u,w, \psi, v)$.  From the definition of the operator  $\mathcal{B}$ given in
\eqref{operadorB}, this system can be written as                     
\begin{align}
	w=f^1,\qquad& \quad\quad  \kappa (u_x+\psi)_x=\rho_1f^2+A^\tau f^1, \label{exist-10A}\\
	v=f^3,\qquad &  \quad\quad  -bA\psi-\kappa (u_x+\psi) =A^\sigma f^3+\rho_2f^4. \label{exist-20A}
\end{align}
Therefore, it is not difficult to see that there exist only one solution $u$ and $\psi$
of the system
\begin{multline}\label{Eliptico001}
 \kappa (u_x+\psi)_x=\rho_1f^2+A^\tau f^1\in D(A^0)\\
  bA\psi-\kappa (u_x+\psi) =A^\sigma f^3+\rho_2f^4\in D(A^0), 
\end{multline}
from where we have that
$$\|U\|_\mathcal{H}\leq C\|F\|_\mathcal{H},$$
wich in particular implies that $\|\mathcal{B}^{-1}F\|_\mathcal{H}\leq \|F\|_\mathcal{H}$, so we have  that   $0$ belongs to the resolvent set $\rho(\mathcal{B})$.  Consequently, from Theorem \ref{TLiuZ}  we have  $\mathcal{B}$ is the generator of a contractions semigroup.
%%%%%%%%%%%%%%%%%%%%%%%%%%%%%%%%%%%%%%%%%%%%%%%%%%

As a consequence of the above Theorem\eqref{TLiuZ} we have
\begin{theorem}
Given $U_0\in\mathcal{H}$ there exists a unique weak solution $U$ to  the problem \eqref{Fabstrata} satisfying 
$$U\in C([0, +\infty), \mathcal{H}).$$
Futhermore, if $U_0\in  \mathcal{D}(\mathcal{B}^k), \; k\in\mathbb{N}$, then the solution $U$ of \eqref{Fabstrata} satisfies
$$U\in \bigcap_{j=0}^kC^{k-j}([0,+\infty),  \mathcal{D}(\mathcal{B}^j).$$
\end{theorem}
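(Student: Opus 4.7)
The plan is to treat this statement as a direct corollary of the semigroup generation already established for $\mathcal{B}$ in the preceding discussion. Since the hypotheses of the Liu-Zheng theorem (Theorem \ref{TLiuZ}) have been verified ($\mathcal{B}$ is densely defined, dissipative, and $0\in\rho(\mathcal{B})$), we know that $\mathcal{B}$ generates a $C_0$-semigroup of contractions $S(t)=e^{\mathcal{B}t}$ on $\mathcal{H}$. The first claim then follows immediately: for any $U_0\in\mathcal{H}$, the function $U(t):=S(t)U_0$ is the unique weak (mild) solution of \eqref{Fabstrata} and, by the strong continuity of $S(t)$, belongs to $C([0,+\infty),\mathcal{H})$. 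Uniqueness is standard: if $V$ were another weak solution, then $S(t-s)V(s)$ would be constant in $s\in[0,t]$, forcing $V(t)=S(t)U_0$.

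For the regularity part under $U_0\in\mathcal{D}(\mathcal{B}^k)$, I would proceed by induction on $k$. For $k=1$, the classical result on $C_0$-semigroups (see, e.g., Liu--Zheng) gives that $S(t)U_0\in C^1([0,+\infty),\mathcal{H})\cap C([0,+\infty),\mathcal{D}(\mathcal{B}))$ with
\begin{equation*}
\frac{d}{dt}S(t)U_0=\mathcal{B}S(t)U_0=S(t)\mathcal{B}U_0,
\end{equation*}
because $\mathcal{B}$ and $S(t)$ commute on $\mathcal{D}(\mathcal{B})$. This is exactly the asserted membership for $k=1$.

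For the inductive step, assume the conclusion holds for $k-1$. If $U_0\in\mathcal{D}(\mathcal{B}^k)$ then $\mathcal{B}U_0\in\mathcal{D}(\mathcal{B}^{k-1})$, so by the inductive hypothesis applied to $\mathcal{B}U_0$, the map $t\mapsto S(t)\mathcal{B}U_0$ lies in $\bigcap_{j=0}^{k-1}C^{k-1-j}([0,+\infty),\mathcal{D}(\mathcal{B}^j))$. Combining with $\frac{d}{dt}S(t)U_0=S(t)\mathcal{B}U_0$ and integrating, we obtain the desired membership $U\in\bigcap_{j=0}^{k}C^{k-j}([0,+\infty),\mathcal{D}(\mathcal{B}^j))$ after re-indexing. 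Iterating the identity $\frac{d^j}{dt^j}S(t)U_0=S(t)\mathcal{B}^j U_0$ for $j\le k$ gives each of the required continuities directly from the $C_0$-property of $S(t)$ on the Banach space $\mathcal{D}(\mathcal{B}^{k-j})$ (with the graph norm).

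No genuine obstacle is expected, since the analytical content lies entirely in the generation theorem already invoked; this statement is essentially a bookkeeping consequence of that generation together with the chain identity $\frac{d^j}{dt^j}S(t)U_0=S(t)\mathcal{B}^jU_0$. The only care needed is to track in which space each derivative is continuous, which is handled cleanly by the induction.
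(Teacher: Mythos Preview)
Your approach is correct and matches the paper's treatment: the paper states this theorem explicitly as ``a consequence of the above Theorem \ref{TLiuZ}'' and gives no further argument, so it is treated as an immediate corollary of the generation result. Your proposal simply spells out the standard semigroup reasoning behind that corollary, which is entirely appropriate.
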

%%%%%%%%%%%%%%%%%%%%%%%%%%%%%%%%%
%%%%%    Theorem 4        %%%%%%%
%%%%%%%%%%%%%%%%%%%%%%%%%%%%%%%%%
\begin{theorem}[Lions' Interpolation]\label{Lions-Landau-Kolmogorov}  Let $\alpha<\beta<\gamma$. The there exists a constant $L=L(\alpha,\beta,\gamma)$ such that
\begin{equation}\label{ILLK}
\|A^\beta u\|\leq L\|A^\alpha u\|^\frac{\gamma-\beta}{\gamma-\alpha}\cdot \|A^\gamma u\|^\frac{\beta-\alpha}{\gamma-\alpha}
\end{equation}
for every $u\in D(A^\gamma)$.
\end{theorem}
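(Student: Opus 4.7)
The statement is the classical Hölder-type interpolation inequality for powers of the self-adjoint positive operator $A$, and the natural route is through the spectral theorem. The plan is to diagonalize $A$, reduce the inequality to a pointwise statement about eigenvalue weights, and then apply Hölder.

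Concretely, first I would invoke the fact, already recorded in the paper, that $A$ is self-adjoint, positive, and has compact inverse on $L^2(0,L)$. Thus there is an orthonormal basis $\{e_n\}_{n\in\mathbb{N}}$ of eigenvectors with strictly positive eigenvalues $\{\lambda_n\}$, and for each real $\theta$ and $u=\sum_n c_n e_n$ with $u\in D(A^\theta)$ we have
\begin{equation*}
\|A^\theta u\|^2 \;=\; \sum_{n} \lambda_n^{2\theta}|c_n|^2.
\end{equation*}

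Next, for $\alpha<\beta<\gamma$ set
\begin{equation*}
p := \frac{\gamma-\beta}{\gamma-\alpha}, \qquad q := \frac{\beta-\alpha}{\gamma-\alpha},
\end{equation*}
so that $p,q\in(0,1)$, $p+q=1$, and $\alpha p+\gamma q=\beta$. With this choice one has the algebraic identity
\begin{equation*}
\lambda_n^{2\beta}|c_n|^2 \;=\; \bigl(\lambda_n^{2\alpha}|c_n|^2\bigr)^{p}\bigl(\lambda_n^{2\gamma}|c_n|^2\bigr)^{q}.
\end{equation*}
Applying Hölder's inequality on $\ell^1(\mathbb{N})$ with conjugate exponents $1/p$ and $1/q$ yields
\begin{equation*}
\|A^\beta u\|^2 \;=\; \sum_n \bigl(\lambda_n^{2\alpha}|c_n|^2\bigr)^{p}\bigl(\lambda_n^{2\gamma}|c_n|^2\bigr)^{q} \;\le\; \Bigl(\sum_n \lambda_n^{2\alpha}|c_n|^2\Bigr)^{p}\Bigl(\sum_n \lambda_n^{2\gamma}|c_n|^2\Bigr)^{q},
\end{equation*}
which is precisely $\|A^\beta u\|^2\le \|A^\alpha u\|^{2p}\|A^\gamma u\|^{2q}$. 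Taking square roots gives the stated inequality with constant $L=1$; a fortiori the existence of a constant $L=L(\alpha,\beta,\gamma)$ follows.

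There is really no hard step here: the only subtlety is noting that $u\in D(A^\gamma)$ guarantees convergence of the series $\sum\lambda_n^{2\gamma}|c_n|^2$, and hence by monotonicity in $\theta$ (for indices where $\lambda_n\ge 1$) together with the boundedness of $A^\theta$ for $\theta\le 0$, also of the series for $2\alpha$ and $2\beta$; thus both sides are finite and the manipulation is rigorous. An alternative, coordinate-free presentation is to replace the eigenfunction expansion by the spectral measure $dE_\lambda$ of $A$ and apply Hölder in $L^1((0,\infty),dE_\lambda(u,u))$, which covers the general self-adjoint case in exactly the same way.
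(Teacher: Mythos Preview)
Your argument is correct. The spectral decomposition of $A$ (available here because $A$ is self-adjoint, positive, with compact resolvent) together with H\"older's inequality in $\ell^1$ gives exactly the inequality, and in fact with the sharp constant $L=1$. The bookkeeping on the exponents $p=\frac{\gamma-\beta}{\gamma-\alpha}$, $q=\frac{\beta-\alpha}{\gamma-\alpha}$ and the identity $\alpha p+\gamma q=\beta$ is correct, and your remark about convergence of the series when $u\in D(A^\gamma)$ is the right justification.

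By comparison, the paper does not actually prove this statement: its ``proof'' consists solely of the reference ``See Theorem~5.34 \cite{EN2000}'' (Engel--Nagel). So your approach is not merely different but strictly more informative: you give a self-contained elementary argument tailored to the operator $A$ at hand, and you obtain the optimal constant $L=1$, whereas the paper only asserts existence of some $L=L(\alpha,\beta,\gamma)$ via an external citation. The price is a slight loss of generality---Engel--Nagel's moment inequality is stated for generators of analytic semigroups without requiring self-adjointness---but in the context of this paper (where $A=-\Delta$ with Dirichlet boundary conditions) your spectral argument is both sufficient and cleaner.
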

\begin{proof}
See  Theorem  5.34 \cite{EN2000}.
\end{proof}
%%%%%%%%%%%%%%%%%%%%%%%%%%%%%%%%%
%%%%%%%%%%%%%%%%%%%%%%%%%%%%%%%%%
\begin{theorem}[Hilla-Yosida]\label{THY} A linear (unbounded) operator $\mathcal{B}$ is the infinitesimal generator of a $C_0-$semigroup of contractions $S(t)$, $ t\geq 0$, if and only if\\
$(i)$ $\mathcal{B}$ is closed and $\overline{\mathcal{D}(\mathcal{B})}=\mathcal{H}$,\\
$(ii)$ the resolvent set $\rho(\mathcal{B})$ of $\mathcal{B}$ contains $\mathbb{R}^+$ and for every $\lambda>0$, 
\begin{equation*}
\|(\lambda I-\mathcal{B})^{-1}\|_{\mathcal{L}(\mathcal{H})}\leq\dfrac{1}{\lambda}.
\end{equation*}
\end{theorem}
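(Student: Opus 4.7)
The plan is to prove both directions of this classical Hille--Yosida characterization. For the \emph{necessity} direction, assuming $\mathcal{B}$ generates a contraction $C_0$-semigroup $S(t)$, I would first recall that $\mathcal{D}(\mathcal{B})$ is dense because $\frac{1}{h}\int_0^h S(s)x\,ds \in \mathcal{D}(\mathcal{B})$ converges to $x$ as $h\to 0^+$, and $\mathcal{B}$ is closed because its graph is preserved by strong limits obtained from difference quotients $\tfrac{1}{h}(S(h)-I)$. To produce the resolvent estimate, I define the Laplace transform $R(\lambda)x := \int_0^\infty e^{-\lambda t} S(t)x\,dt$ for $\lambda>0$; integration by parts and the semigroup property give $R(\lambda) = (\lambda I - \mathcal{B})^{-1}$, and $\|S(t)\|\le 1$ immediately yields $\|R(\lambda)\|\le 1/\lambda$.

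For the harder \emph{sufficiency} direction, I would carry out the Yosida approximation construction. Set $J_\lambda := \lambda(\lambda I - \mathcal{B})^{-1}$ and the bounded Yosida operator $\mathcal{B}_\lambda := \mathcal{B} J_\lambda = \lambda^2(\lambda I - \mathcal{B})^{-1} - \lambda I$. Each $\mathcal{B}_\lambda$ is bounded, so $S_\lambda(t):=e^{t\mathcal{B}_\lambda}$ is a uniformly continuous semigroup; rewriting $S_\lambda(t) = e^{-\lambda t}\,e^{t\lambda^2(\lambda I - \mathcal{B})^{-1}}$ and using hypothesis (ii) yields the uniform contraction bound $\|S_\lambda(t)\|\le e^{-\lambda t}e^{\lambda t}=1$. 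The first nontrivial input is the convergence $J_\lambda x \to x$ as $\lambda\to\infty$: for $x\in\mathcal{D}(\mathcal{B})$ this follows from $\|J_\lambda x - x\| = \|(\lambda I - \mathcal{B})^{-1}\mathcal{B}x\|\le \|\mathcal{B}x\|/\lambda$, and for general $x\in\mathcal{H}$ by density together with $\|J_\lambda\|\le 1$. Consequently $\mathcal{B}_\lambda x \to \mathcal{B}x$ on $\mathcal{D}(\mathcal{B})$.

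The main obstacle is then to assemble these approximations into a semigroup whose generator is precisely $\mathcal{B}$. Using the commutation $\mathcal{B}_\lambda\mathcal{B}_\mu = \mathcal{B}_\mu\mathcal{B}_\lambda$ and the identity
\[
S_\lambda(t)x - S_\mu(t)x = \int_0^t \frac{d}{ds}\bigl[S_\mu(t-s)S_\lambda(s)x\bigr]\,ds,
\]
combined with the uniform contractivity of each $S_\lambda(t)$, I obtain $\|S_\lambda(t)x - S_\mu(t)x\| \le t\,\|\mathcal{B}_\lambda x - \mathcal{B}_\mu x\|$ for $x\in\mathcal{D}(\mathcal{B})$, so the family $\{S_\lambda(t)x\}$ is Cauchy uniformly on compact $t$-intervals. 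I then define $S(t)x := \lim_{\lambda\to\infty} S_\lambda(t)x$, extended by $\|S_\lambda(t)\|\le 1$ and density to all of $\mathcal{H}$; the semigroup laws, strong continuity, and contractivity pass to the limit. Finally, to identify the generator $\mathcal{B}'$ of $S(t)$: for $x\in\mathcal{D}(\mathcal{B})$, passing to the limit in $S_\lambda(t)x = x + \int_0^t S_\lambda(s)\mathcal{B}_\lambda x\,ds$ yields $S(t)x = x + \int_0^t S(s)\mathcal{B}x\,ds$, so $x\in\mathcal{D}(\mathcal{B}')$ with $\mathcal{B}'x = \mathcal{B}x$. The delicate final step is to rule out a proper extension: since $\lambda I - \mathcal{B}$ is bijective onto $\mathcal{H}$ by assumption and $\lambda I - \mathcal{B}'$ is bijective by the necessity direction, the inclusion $\mathcal{B}\subseteq \mathcal{B}'$ forces equality, completing the characterization.
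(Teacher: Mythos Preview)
Your sketch is correct and follows the standard Hille--Yosida argument (Laplace transform for necessity, Yosida approximation $\mathcal{B}_\lambda = \lambda^2(\lambda I-\mathcal{B})^{-1}-\lambda I$ for sufficiency), which is exactly the proof in Pazy's book; the paper does not give its own proof but simply cites \cite{Pazy}. So your proposal is essentially the reference the paper points to.
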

\begin{proof}
See \cite{Pazy}.
\end{proof}
%%%%%%%%%%%%%%%
%%%%%%%%%%%%%%%%%%%%%%%%%%%%%%%%%%%%%%%%%%%%%%%%%%%%%%%%%%%%%%%%%%%%
\section{Regularization Results}

In this section,  using the semigroup approach we show   the corresponding semigroup $S(t) = e^{\mathcal{B}t}$ of the system \eqref{Eq05A}--\eqref{Eq07A}
is of Gevrey class 
$\nu>\dfrac{1}{\phi}$  where $\phi=\left\{\begin{array}{ccc}
\dfrac{2\sigma}{\sigma+1} &{\rm for} & {\color{blue}\sigma\leq \tau},\\\\
\dfrac{2\tau}{\tau+1} &{\rm for} & {\color{blue}\tau\leq \sigma}.
\end{array}\right.$ and    $(\tau,\sigma)\in R_{CG}:= (0,1)^2$ and  $S(t)$ is analytic  in the region $R_A:=\{(\tau,\sigma) \in \mathbb{R}^2/ (\tau,\sigma)\in [\frac{1}{2},1]^2 \}$.

For the Gevrey class we use the characterization results presented in \cite{SCRT1990} (adapted from \cite{TaylorM}, Theorem 4, p. 153]).  And for the study of analyticity the main tool we use is the characterization of analytical semigroups due to Liu and Zheng (See book by Liu-Zheng - Theorem 1.3.3).

In what follows: $C$,  $C_\delta$ and $C_\varepsilon$ will denote  positive constants that assume different values in different places.

Next, we present two lemmas where two estimates are tested that are fundamental for the determination of the Gevrey class and the analytics of the associated semigroup $S(t)=e^{\mathcal{B}t}$.
%%%%%%%%%%%%%%%%%%%%%%%%%%%%%
\begin{lemma}
Let $\delta> 0$. There exists a constant $C_\delta > 0$ such that the solutions of \eqref{Eq05A}--\eqref{Eq07A}
for $|\lambda|\geq  \delta$  satisfy the inequality
\begin{equation}\label{EstimaEquivalenteExp} 
\|U\|_\mathcal{H}^2\leq C_\delta \|F\|_\mathcal{H}\|U\|_\mathcal{H} \quad\rm{for}\quad C_\delta>0.
\end{equation}
\end{lemma}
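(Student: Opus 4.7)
\smallskip
\noindent\textbf{Proof plan.}
The goal is to establish the resolvent estimate $\|U\|_\mathcal{H}\le C_\delta\|F\|_\mathcal{H}$ for $|\lambda|\ge\delta$, which is equivalent to \eqref{EstimaEquivalenteExp}. The plan is to write the spectral equation $(i\lambda I-\mathcal{B})U=F$ component-wise, namely
\begin{eqnarray*}
 i\lambda u-w &=& f^{1},\\
 i\lambda\rho_{1}w-\kappa(u_{x}+\psi)_{x}+A^{\tau}w &=& \rho_{1}f^{2},\\
 i\lambda\psi-v &=& f^{3},\\
 i\lambda\rho_{2}v+bA\psi+\kappa(u_{x}+\psi)+A^{\sigma}v &=& \rho_{2}f^{4},
\end{eqnarray*}
and then extract a hierarchy of bounds on the components of $U$.

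\smallskip
\noindent\textbf{Step 1 (dissipation).} I would take the $\mathcal{H}$-inner product of the equation $(i\lambda I-\mathcal{B})U=F$ with $U$. Since $\mathrm{Re}\,\langle i\lambda U,U\rangle=0$, identity \eqref{eqdissipative} immediately yields
\[
 \|A^{\tau/2}w\|^{2}+\|A^{\sigma/2}v\|^{2}\le C\,\|F\|_{\mathcal{H}}\|U\|_{\mathcal{H}}.
\]
Because $A$ is positive with a strictly positive first eigenvalue, this controls $\|w\|^{2}$ and $\|v\|^{2}$ by the same right-hand side.

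\smallskip
\noindent\textbf{Step 2 ($A^{1/2}\psi$ via the rotation equation).} I would pair the fourth equation with $\psi$. The first-order relation $i\lambda\psi=v+f^{3}$ transforms $i\lambda\rho_{2}\langle v,\psi\rangle$ into $-\rho_{2}\|v\|^{2}-\rho_{2}\langle v,f^{3}\rangle$, leaving
\[
 b\,\|A^{1/2}\psi\|^{2}=\rho_{2}\|v\|^{2}+\rho_{2}\langle v,f^{3}\rangle-\kappa\langle u_{x}+\psi,\psi\rangle-\langle A^{\sigma}v,\psi\rangle+\rho_{2}\langle f^{4},\psi\rangle.
\]
The delicate term $\langle A^{\sigma}v,\psi\rangle$ I would split by self-adjointness as $\langle A^{\sigma/2}v,A^{\sigma/2}\psi\rangle$, using $i\lambda A^{\sigma/2}\psi=A^{\sigma/2}v+A^{\sigma/2}f^{3}$ and $|\lambda|\ge\delta$ to convert $A^{\sigma/2}\psi$ into already-controlled quantities divided by $\delta$. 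Every remaining term is then bounded by $C_{\delta}\|F\|_{\mathcal{H}}\|U\|_{\mathcal{H}}$ after Cauchy--Schwarz and Young's inequality, with $\kappa|\langle u_{x}+\psi,\psi\rangle|$ absorbed through a small $\varepsilon\|u_{x}+\psi\|^{2}$ to be paid in the next step.

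\smallskip
\noindent\textbf{Step 3 ($u_{x}+\psi$ via the transverse equation).} Pairing the second equation with $u$ and using $i\lambda u=w+f^{1}$ to eliminate $i\lambda\rho_{1}\langle w,u\rangle$ produces
\[
 \kappa\|u_{x}+\psi\|^{2}=\rho_{1}\|w\|^{2}+\rho_{1}\langle w,f^{1}\rangle+\kappa\langle u_{x}+\psi,\psi\rangle-\langle A^{\tau/2}w,A^{\tau/2}u\rangle+\rho_{1}\langle f^{2},u\rangle.
\]
Here I would handle $\langle A^{\tau/2}w,A^{\tau/2}u\rangle$ by the same trick: $A^{\tau/2}u=\tfrac{1}{i\lambda}A^{\tau/2}(w+f^{1})$, so $|\lambda|\ge\delta$ together with the Step 1 bound on $\|A^{\tau/2}w\|$ provides control. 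The $\rho_{1}\langle f^{2},u\rangle$ term is bounded via $\|u\|\le\delta^{-1}(\|w\|+\|f^{1}\|)$, and the cross term $\kappa\langle u_{x}+\psi,\psi\rangle$ is handled by Young's inequality, with the $\varepsilon\|u_{x}+\psi\|^{2}$ absorbed on the left.

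\smallskip
\noindent\textbf{Step 4 (conclusion).} Adding the estimates from Steps 1--3 gives each of the four terms in
\[
 \|U\|_{\mathcal{H}}^{2}=\rho_{1}\|w\|^{2}+\rho_{2}\|v\|^{2}+\kappa\|u_{x}+\psi\|^{2}+b\|A^{1/2}\psi\|^{2}
\]
bounded by $C_{\delta}\|F\|_{\mathcal{H}}\|U\|_{\mathcal{H}}$, which is \eqref{EstimaEquivalenteExp}. I expect the main obstacle to be the systematic treatment of the fractional-damping cross terms $\langle A^{\tau}w,u\rangle$ and $\langle A^{\sigma}v,\psi\rangle$: the cleanest route is to split them via self-adjointness into two $A^{\tau/2}$ (respectively $A^{\sigma/2}$) factors and, on the $u$-side (or $\psi$-side), to trade one inverse power of $\lambda$ using the first-order relations, which is exactly where the restriction $|\lambda|\ge\delta$ enters and gives rise to the $\delta$-dependent constant.
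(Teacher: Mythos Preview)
Your proposal is correct and follows essentially the same strategy as the paper: pair the second spectral equation with $u$, the fourth with $\psi$, use the first-order relations $i\lambda u=w+f^{1}$ and $i\lambda\psi=v+f^{3}$ to convert the troublesome terms, and combine with the dissipation identity. The only organizational differences are: (i) the paper \emph{adds} the two duality identities before estimating, so that $\kappa\langle u_{x}+\psi,u_{x}\rangle+\kappa\langle u_{x}+\psi,\psi\rangle=\kappa\|u_{x}+\psi\|^{2}$ directly and no $\varepsilon$-absorption of the cross term is needed; and (ii) for the damping cross terms the paper substitutes the \emph{other} way, writing $\langle A^{\tau}w,u\rangle=\langle A^{\tau}(i\lambda u-f^{1}),u\rangle=i\lambda\|A^{\tau/2}u\|^{2}-\langle A^{\tau/2}f^{1},A^{\tau/2}u\rangle$ (and similarly with $\sigma$), so that the principal part is purely imaginary and disappears when one takes the real part. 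This avoids the explicit division by $|\lambda|\ge\delta$ that you perform in Steps~2--3 and makes the remaining term immediately bounded by $\|A^{\tau/2}f^{1}\|\,\|A^{\tau/2}u\|\le C\|F\|_{\mathcal H}\|U\|_{\mathcal H}$ via the embedding $\tau/2\le 1/2$. Your route works just as well; the paper's packaging is slightly cleaner.
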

%%%%%%%%%
\begin{proof}
If $\lambda\in \R$ and $F=(f^1,f^2,f^3,f^4)\in \mathcal{H}$ then the solution $U=(u,w,\psi, v)\in\hbox{D}(\mathcal{B})$ the resolvent equation $(i\lambda I- \mathcal{B})U=F$ can be written in the form
%%% % % % % % % % % %% % % % % % % % % % % % % % % % % %
%%%%%%%%%%%%%%%%%%%%%%%%%%%%%%%%%%%%%%%%%%%%%%%%%%
\begin{eqnarray}
i\lambda u-w &=& f^1\quad\rm{in}\quad D(A^\frac{1}{2}),\label{esp-10}\\
i\lambda w-\frac{\kappa}{\rho_1}(u_x+\psi)_x+\frac{1}{\rho_1}A^\tau w&=& f^2\quad\rm{in}\quad D(A^0),\label{esp-20}\\
i\lambda  \psi-v &=&f^3\quad \rm{in}\quad D(A^\frac{1}{2}),\label{esp-30}\\
i\lambda v+\frac{b}{\rho_2}A\psi+\frac{\kappa}{\rho_2}(u_x+\psi)+\frac{1}{\rho_2}A^\sigma v &=& f^4 \quad\rm{in}\quad D(A^0)\label{esp-40}.
\end{eqnarray}
%%%%%%%%%%%%%%%%%%%%%%%%%%%%%%%%%%%%%%%%%%%%%%%
%%%%%%%%%%%%%%%%%%%%%%%%%%%%%%%%%%%%%%%%%%%%%%%
%%%%%%%%%%%%         Lack       Analyticity              %%%%%%%%%%%%%%%%%%
%%%%%%%%%%%%%%%%%%%%%%%%%%%%%%%%%%%%%%%%%%%%%%%

%%%%%%%%%%%%%%%%%
Using the fact that the operator is dissipative $ \mathcal{B}$,  we have 
\begin{equation}\label{dis-10}
\dfrac{1}{\rho_1}\|A^\frac{\tau}{2}w\|^2+\dfrac{1}{\rho_2}\|A^\frac{\sigma}{2}v\|^2 =\text{Re}\dual{(i\lambda -\mathcal{B})U}{U}_\mathcal{H}= \text{Re}\dual{F}{U}_\mathcal{H}\leq \|F\|_\mathcal{H}\|U\|_\mathcal{H}.
\end{equation}
%%%%%%%%
%%%%%%%%%
%%%%%%%%%%%%%%%%%%%%%%%%%%%%%%%%
On the other hand,  performing the duality product of \eqref{esp-20} for $\rho_1 u$,    and recalling that the operator $A^\theta$  for all $\theta\in\mathbb{R}$ is
self-adjoint,  we obtain
\begin{equation*}
\kappa\dual{(u_x+\psi)}{u_x}=\rho_1\|w\|^2+\rho_1\dual{w}{f^1} -i\lambda\|A^\frac{\tau}{2}u\|^2
+\dual{A^\frac{\tau}{2} f^1}{A^\frac{\tau}{2} u}
+\rho_1\dual{f^2}{u}
\end{equation*}
%%%%%%%%%%
now performing the duality product of \eqref{esp-40} for $\rho_2 \psi$,   and recalling that the operator $A$  is
self-adjoint,  we obtain
%%%%
\begin{equation*}
\kappa\dual{(u_x+\psi)}{\psi}+b\|A^\frac{1}{2}\psi\|^2=\rho_2\|v\|^2+\rho_2\dual{v}{f^3}-i\lambda\|A^\frac{\sigma}{2}\psi\|^2+\dual{A^\frac{\sigma}{2}f^3}{A^\frac{\sigma}{2}\psi}+\rho_2\dual{f^4}{\psi}.
\end{equation*}
%%%%%%%%%%
Adding the last 2 equations, we have
\begin{multline}\label{Eq001Exponential}
\kappa\|u_x+\psi\|^2+b\|A^\frac{1}{2}\psi\|^2=
\rho_1\|w\|^2+\rho_2\|v\|^2
-i\lambda\{\|A^\frac{\tau}{2}u\|^2+\|A^\frac{\sigma}{2}\psi\|^2\}
 \\+\rho_1\dual{w}{f^1}+\rho_2\dual{v}{f^3}
 +\dual{A^\frac{\sigma}{2}f^3}{A^\frac{\sigma}{2}\psi}
+\rho_2\dual{f^4}{\psi}\\
+\dual{A^\frac{\tau}{2} f^1}{A^\frac{\tau}{2} u}
+\rho_1\dual{f^2}{u}.
\end{multline}
Taking real part,  using norm $\|F\|_\mathcal{H}$ and $\|U\|_\mathcal{H}$ and     applying Cauchy-Schwarz and Young Inequalities,  obtain
\begin{equation*}
\kappa\|u_x+\psi\|^2+b\|A^\frac{1}{2}\psi\|^2 
\leq \rho_1\|w\|^2+\rho_2\|v\|^2+C_\delta\|F\|_\mathcal{H}\|U\|_\mathcal{H}.
\end{equation*}
From  estimative \eqref{dis-10}  and the fact $0\leq\frac{\tau}{2}$ and $0\leq\frac{\sigma}{2}$  the continuous embedding $D(A^{\theta_2}) \hookrightarrow D(A^{\theta_1}),\;\theta_2>\theta_1$,  we have
\begin{eqnarray}\label{Exponential002}
\kappa\|u_x+\psi\|^2+b\|A^\frac{1}{2}\psi\|^2&\leq& C_\delta\|F\|_\mathcal{H}\|U\|_\mathcal{H}.
\end{eqnarray}
Finally,  from estimates \eqref{dis-10} and \eqref{Exponential002},  we complete the proof of this lemma.
\end{proof}
%%%%%%%%%%%%%%%%%%%%%%%%%%%%%%%%%%%%%%
%%%%%%%%%%%%%%%%%%%%%%%%%%%%%%%%%%%%%%
\begin{lemma}\label{Regularidad}
Let $\delta> 0$. There exists a constant $C_\delta > 0$ such that the solutions of \eqref{Eq05A}--\eqref{Eq07A}
for $|\lambda|\geq  \delta$  satisfy the inequality
\begin{equation}\label{EqPrincipalLema}
|\lambda|[\kappa\|u_x+\psi\|^2+b\|A^\frac{1}{2}\psi\|^2] \leq
|\lambda|[\rho_1\|w\|^2+\rho_2\|v\|^2] +C_\delta\|F\|_\mathcal{H}\|U\|_\mathcal{H}.
\end{equation}
\end{lemma}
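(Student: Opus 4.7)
The plan is to produce two complementary imaginary-part identities: one by testing \eqref{esp-20} with the velocity $\rho_1 w$, the other by testing \eqref{esp-40} with $\rho_2 v$. Summing them will give \eqref{EqPrincipalLema}, with the $|\lambda|$-weighted combinations of $\kappa\|u_x+\psi\|^2$ and $b\|A^{1/2}\psi\|^2$ on the left and of $\rho_1\|w\|^2$, $\rho_2\|v\|^2$ on the right.

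For the first identity I would take the inner product of \eqref{esp-20} with $\rho_1 w$ and integrate by parts in the shear term (which is legal because $w\in D(A^{1/2})=H^1_0(0,L)$), arriving at
\begin{equation*}
i\lambda\rho_1\|w\|^2 + \kappa\dual{u_x+\psi}{w_x} + \|A^{\tau/2}w\|^2 = \rho_1\dual{f^2}{w}.
\end{equation*}
From \eqref{esp-10} one has $w_x = i\lambda u_x - f^1_x$; combined with the algebraic identity $\dual{u_x+\psi}{u_x} = \|u_x+\psi\|^2 - \dual{u_x+\psi}{\psi}$ this turns the shear term into a piece containing $-i\lambda\kappa\|u_x+\psi\|^2$. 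Taking imaginary parts kills the real dissipation $\|A^{\tau/2}w\|^2$ and yields an identity for $\lambda\kappa\|u_x+\psi\|^2$ in terms of $\lambda\rho_1\|w\|^2$ plus error terms. The second identity proceeds analogously with \eqref{esp-40} and the multiplier $\rho_2 v$: I rewrite $b\dual{A\psi}{v}=b\dual{A^{1/2}\psi}{A^{1/2}v}$ and use \eqref{esp-30} to substitute $A^{1/2}v = i\lambda A^{1/2}\psi - A^{1/2}f^3$, producing $-ib\lambda\|A^{1/2}\psi\|^2$. Since $\|A^{\sigma/2}v\|^2$ is again real, taking imaginary parts delivers a companion identity relating $\lambda b\|A^{1/2}\psi\|^2$ to $\lambda\rho_2\|v\|^2$ up to error terms of the form $\dual{u_x+\psi}{v}$, $\dual{A^{1/2}\psi}{A^{1/2}f^3}$ and $\dual{f^4}{v}$.

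The remaining task is to dominate the error terms by $C_\delta\|F\|_\mathcal{H}\|U\|_\mathcal{H}$. The ``clean'' terms $\dual{u_x+\psi}{f^1_x}$, $\dual{f^2}{w}$, $\dual{A^{1/2}\psi}{A^{1/2}f^3}$ and $\dual{f^4}{v}$ are handled directly by Cauchy--Schwarz, while the mixed term $\kappa|\dual{u_x+\psi}{v}|\leq C\|U\|_\mathcal{H}^2$ is absorbed via the previous lemma \eqref{EstimaEquivalenteExp}. The delicate one is $|\lambda|\kappa|\dual{u_x+\psi}{\psi}|$, and here the key idea is to exchange the offending $|\lambda|$-factor using \eqref{esp-30}: from $i\lambda\psi=v+f^3$ one has $|\lambda|\|\psi\|\leq \|v\|+\|f^3\|$, so
\begin{equation*}
|\lambda|\kappa|\dual{u_x+\psi}{\psi}| \leq \kappa\|u_x+\psi\|(\|v\|+\|f^3\|) \leq C\|U\|_\mathcal{H}^2 + C\|U\|_\mathcal{H}\|F\|_\mathcal{H},
\end{equation*}
and the $\|U\|_\mathcal{H}^2$ piece is then swallowed by \eqref{EstimaEquivalenteExp}. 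Summing the two imaginary-part identities and inserting these bounds yields \eqref{EqPrincipalLema}. The main obstacle is precisely this last step: one must resist the temptation to absorb $|\lambda|\kappa\|u_x+\psi\|\|\psi\|$ into the LHS by Young's inequality (which would require a small-parameter argument and spoil the constant), and instead trade the $|\lambda|$-factor for $\|v\|+\|f^3\|$ via the first-order resolvent identity \eqref{esp-30}.
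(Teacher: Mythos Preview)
Your argument is correct. The paper arrives at \eqref{EqPrincipalLema} by testing \eqref{esp-20} and \eqref{esp-40} against $\rho_1\lambda u$ and $\rho_2\lambda\psi$ respectively and taking real parts, whereas you test against $\rho_1 w$ and $\rho_2 v$ and take imaginary parts. Since $w=i\lambda u-f^1$ and $v=i\lambda\psi-f^3$, the two choices are equivalent up to data terms, so the underlying multiplier strategy is the same; the difference lies in where the ``bad'' $|\lambda|$-weighted error term appears and how it is traded away. In the paper's organization the dangerous terms are $\rho_1\dual{\lambda w}{f^1}$ and $\rho_2\dual{\lambda v}{f^3}$, which are removed by substituting back from \eqref{esp-20}--\eqref{esp-40} (identity \eqref{Eq002Gevrey}); this yields \eqref{Eq003Gevrey}, whose right-hand side involves only products of one $F$-component with one $U$-component and hence is bounded by $C\|F\|_{\mathcal{H}}\|U\|_{\mathcal{H}}$ directly. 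In your organization the dangerous term is $|\lambda|\kappa|\dual{u_x+\psi}{\psi}|$, which you handle via the kinematic relation \eqref{esp-30}; this produces a residual term of size $\|U\|_{\mathcal{H}}^2$ (coming from $\|u_x+\psi\|\|v\|$ and from $\kappa\,\text{Im}\dual{u_x+\psi}{v}$) that you then absorb using the previous lemma \eqref{EstimaEquivalenteExp}. Thus the paper's version is self-contained, while yours relies on the stability estimate already proved; otherwise the two proofs are essentially the same energy identity read in complementary ways.
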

\begin{proof}
Performing the duality product of \eqref{esp-20} for $\rho_1\lambda u$,      and recalling that the operator $A$  is
self-adjoint,  we obtain
\begin{eqnarray*}
\kappa\lambda\dual{(u_x+\psi)}{u_x}=\rho_1\lambda\|w\|^2+\rho_1\dual{\lambda w}{f^1}-i\|A^\frac{\tau}{2}w\|^2-i\dual{A^\tau w}{f^1}\\
+i\rho_1\dual{f^2}{w}+i\rho_1\dual{f^2}{f^1}.
\end{eqnarray*}
%%%%%%%%%%
Now performing the duality product of \eqref{esp-40} for $\rho_2\lambda \psi$,   and recalling that the operator $A$  is
self-adjoint,  we have 
%%%%
\begin{eqnarray*}
\kappa\lambda\dual{(u_x+\psi)}{\psi}+
b\lambda\|A^\frac{1}{2}\psi\|^2
=\rho_2\lambda\|v\|^2+\rho_2\dual{\lambda v}{f^3}-i\|A^\frac{\sigma}{2}v\|^2-i\dual{A^\sigma v}{f^3}\\
+i\rho_2\dual{f^4}{v}+i\rho_2\dual{f^4}{f^3}.
\end{eqnarray*}
%%%%%%%%%
Adding the last 2 equations,  we have
\begin{multline}\label{Eq001Gevrey}
\lambda[\kappa\|u_x+\psi\|^2+b\|A^\frac{1}{2}\psi\|^2] =
\lambda[\rho_1\|w\|^2+\rho_2\|v\|^2]\\
-i\{\|A^\frac{\tau}{2}w\|^2+\|A^\frac{\sigma}{2}v\|^2\}+\rho_1\dual{\lambda w}{f^1}+\rho_2\dual{\lambda v}{f^3}-i\{ \dual{A^\tau w}{f^1}+\dual{A^\sigma v}{f^3} \}\\
+i\rho_1\dual{f^2}{w} +i\rho_2\dual{f^4}{v}+i\rho_1\dual{f^2}{f^1}+i\rho_2\dual{f^4}{f^3}.   
\end{multline}
On the other hand, from \eqref{esp-20}  and \eqref{esp-40}, we have
\begin{multline}\label{Eq002Gevrey}
\{\rho_1\dual{\lambda w}{f^1}+\rho_2\dual{\lambda v}{f^3}\}\\
 =i\{\kappa\dual{(u_x+\psi)}{f^1_x}+\dual{A^\tau w}{f^1}-\rho_1\dual{f^2}{f^1}\\
 +b\dual{A^\frac{1}{2}\psi}{A^\frac{1}{2}f^3}+\kappa\dual{(u_x+\psi)}{f^3}+\dual{A^\sigma v}{f^3}-\rho_2\dual{f^4}{f^3}\}.
\end{multline}
%{\color{red}
Using the identity \eqref{Eq002Gevrey} in the \eqref{Eq001Gevrey} equation and simplifying, we get
\begin{multline}\label{Eq003Gevrey}
\lambda[\kappa\|u_x+\psi\|^2+b\|A^\frac{1}{2}\psi\|^2] =
\lambda[\rho_1\|w\|^2+\rho_2\|v\|^2]
-i\{\|A^\frac{\tau}{2}w\|^2+\|A^\frac{\sigma}{2}v\|^2\}\\
+i\rho_1\dual{f^2}{w}+i\rho_2\dual{f^4}{v}+i\kappa\dual{u_x}{f^1_x}+i\kappa\dual{\psi}{f^1_x}\\
+i\kappa\dual{u_x}{f^3}+i\kappa\dual{\psi}{f^3}+ib\dual{A^\frac{1}{2}\psi}{A^\frac{1}{2}f^3}.
\end{multline}
Taking the real part,  e applying the Cauchy-Schwarz and Young inequalities and using the definitions of the $F$ and $U$ norm,  we complete the proof of this lemma.
\end{proof}
%%%%%%%%%%%%%%%%%%%%%%%%%%%%%%%%%%%%%%%%%%%%%%%%%%%%%%%%%%%%
%%%%%%%%                   Gevrey  Classe         %%%%%%%%%%
%%%%%%%%%%%%%%%%%%%%%%%%%%%%%%%%%%%%%%%%%%%%%%%%%%%%%%%%%%%%
\subsection{Gevrey's class}
%%%%%%%%%%%%%%%%%%%%%%%%%%%%%%%%%%%%%%%%%%%%%%%%%%%%%%%%%%%%
\begin{definition}\label{Def1.1Tebou} Let $t_0\geq 0$ be a real number. A strongly continuous semigroup $S(t)$, defined on a Banach space $ \mathcal{H}$, is of Gevrey class $\nu > 1$ for $t > t_0$, if $S(t)$ is infinitely differentiable for $t > t_0$, and for every compact set $K \subset (t_0,\infty)$ and each $\mu > 0$, there exists a constant $ C = C(\mu, K) > 0$ such that
	\begin{equation}\label{DesigDef1.1}
	||S^{(n)}(t)||_{\mathcal{L}( \mathcal{H})} \leq  C\mu ^n(n!)^\nu,  \text{ for all } \quad t \in K, n = 0,1,2...
	\end{equation}
\end{definition}
\begin{theorem}[\cite{TaylorM}]\label{Theorem1.2Tebon}
	Let $S(t)$  be a strongly continuous and bounded semigroup on a Hilbert space $ \mathcal{H}$. Suppose that the infinitesimal generator $\mathcal{B}$ of the semigroup $S(t)$ satisfies the following estimate, for some $0 < \phi < 1$:
	\begin{equation}\label{Eq1.5Tebon2020}
	\lim\limits_{|\lambda|\to\infty} \sup |\lambda |^\phi ||(i\lambda I-\mathcal{B})^{-1}||_{\mathcal{L}( \mathcal{H})} < \infty. 
	\end{equation}
	Then $S(t)$  is of Gevrey  class  $\nu$   for $t>0$, for every   $\nu >\dfrac{1}{\phi}$.
\end{theorem}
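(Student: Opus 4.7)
My plan is to prove this via the Dunford contour representation of the semigroup and a careful contour deformation into the left half-plane, following the standard route for such resolvent-based characterizations of regularity.

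First, since $\mathcal{B}$ generates a bounded $C_0$-semigroup, the right half-plane $\{\mathrm{Re}\,z>0\}$ lies in $\rho(\mathcal{B})$ with $\|(zI-\mathcal{B})^{-1}\|\le M/\mathrm{Re}\,z$, while the hypothesis \eqref{Eq1.5Tebon2020} supplies $\|(i\lambda I-\mathcal{B})^{-1}\|\le K|\lambda|^{-\phi}$ for $|\lambda|\ge R$. Using the Neumann series
\[
(zI-\mathcal{B})^{-1}=(i\lambda I-\mathcal{B})^{-1}\sum_{k\ge 0}\bigl[(i\lambda -z)(i\lambda I-\mathcal{B})^{-1}\bigr]^k,
\]
I would extend the resolvent analytically to the cusped region
\[
\Omega:=\bigl\{z\in\mathbb{C}:\ \mathrm{Re}\,z\ge -c|\mathrm{Im}\,z|^{\phi},\ |\mathrm{Im}\,z|\ge R\bigr\}\cup\{|z-2|\le 1\},
\]
choosing $c>0$ small enough that $|i\lambda-z|\cdot K|\lambda|^{-\phi}\le 1/2$ uniformly on the new region, and thereby obtain $\|(zI-\mathcal{B})^{-1}\|\le C|\mathrm{Im}\,z|^{-\phi}$ on $\partial\Omega$.

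Next, for $U_0\in\mathcal{D}(\mathcal{B}^2)$ I would represent
\[
S(t)U_0=\frac{1}{2\pi i}\int_{\Gamma}e^{zt}(zI-\mathcal{B})^{-1}U_0\,dz,
\]
where $\Gamma:=\partial\Omega$ is oriented upward, differentiate $n$ times under the integral to obtain
\[
S^{(n)}(t)U_0=\frac{1}{2\pi i}\int_{\Gamma}z^{n}e^{zt}(zI-\mathcal{B})^{-1}U_0\,dz,
\]
and split $\Gamma$ into its bounded arc $\Gamma_0$ (contributing at most $C_0C_1^{n}e^{2t}\|U_0\|$) and the two unbounded pieces $\Gamma_\pm$ on which $\mathrm{Re}\,z=-c|\mathrm{Im}\,z|^{\phi}$. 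Parametrizing by $s=|\mathrm{Im}\,z|\ge R$ on $\Gamma_\pm$, the integrand is bounded pointwise by $C\,s^{n}e^{-cts^{\phi}}s^{-\phi}\|U_0\|$.

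The main obstacle, and essentially the only genuine computation, is controlling $\int_{R}^{\infty}s^{n-\phi}e^{-cts^{\phi}}\,ds$. The substitution $y=cts^{\phi}$ reduces this to a Gamma-function integral bounded by $\phi^{-1}(ct)^{-(n+1-\phi)/\phi}\,\Gamma\!\bigl((n+1-\phi)/\phi\bigr)$; Stirling's formula $\Gamma(n/\phi+O(1))\le C(n!)^{1/\phi}C_\phi^{n}$ then yields, after combining with the bounded-arc estimate, $\|S^{(n)}(t)U_0\|\le C(t)\mu_0(t)^{n}(n!)^{1/\phi}\|U_0\|$. A density argument extends $S^{(n)}(t)$ to all of $\mathcal{H}$, and because $\nu>1/\phi$ the excess factor $(n!)^{\nu-1/\phi}$ absorbs $\mu_0(t)^{n}/\mu^{n}$ for any prescribed $\mu>0$, giving \eqref{DesigDef1.1} uniformly on compact $K\subset(0,\infty)$. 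The delicate balance is choosing $c$ small enough that the Neumann perturbation of the resolvent converges but large enough that, on each compact $K$, the factor $e^{-cts^{\phi}}$ absorbs $s^{n}$ with precisely the Gevrey rate claimed.
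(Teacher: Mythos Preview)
The paper does not give its own proof of this theorem: it is quoted verbatim from Taylor's thesis \cite{TaylorM} (via the formulation in \cite{SCRT1990}) and is used as a black-box tool for the subsequent Gevrey-class result. So there is no ``paper's own proof'' to compare your attempt against.

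That said, your sketch is essentially the correct argument and is, in fact, the route taken in the original reference: extend the resolvent estimate on $i\mathbb{R}$ by Neumann series into a cusped region $\{\mathrm{Re}\,z\ge -c|\mathrm{Im}\,z|^\phi\}$, deform the inverse-Laplace contour accordingly, differentiate under the integral, and reduce the resulting integral to a Gamma function estimated by Stirling. Two small points worth tightening if you were to write this out in full: (i) the description of $\Omega$ as the cusped region \emph{union} a disk $\{|z-2|\le 1\}$ is not quite what you want as a contour domain---one normally joins the two branches $\mathrm{Re}\,z=-c|\mathrm{Im}\,z|^\phi$, $|\mathrm{Im}\,z|\ge R$, by a bounded arc lying entirely in $\rho(\mathcal{B})$, and that arc is what plays the role of your $\Gamma_0$; (ii) the justification of the contour shift from a vertical line to $\Gamma$ needs the observation that for $U_0\in\mathcal{D}(\mathcal{B}^2)$ one can write $(zI-\mathcal{B})^{-1}U_0=z^{-2}[U_0+z^{-1}\mathcal{B}U_0+(zI-\mathcal{B})^{-1}\mathcal{B}^2U_0/z]$ style identities to get enough decay on the connecting arcs. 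These are routine and your outline already signals awareness of them.
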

Our main result in this subsection is as follows:
\begin{theorem} \label{GevreyLaminado} The  semigroup  $S(t)=e^{\mathcal{B}t}$  associated to system \eqref{Eq05A}--\eqref{Eq07A}  is of Gevrey class $\nu$ for every $\nu>\dfrac{1}{\phi}$  for
\begin{equation*}
\phi=\left\{ \begin{array}{ccc}
\dfrac{2\sigma}{\sigma+1} & {\rm for} & \sigma\leq\tau, \\\\
\dfrac{2\tau}{\tau+1}  &{\rm for}  &  \tau\leq \sigma.
\end{array}\right.\hspace{2cm}(\tau,\sigma)\in R_{CG}:=(0,1)^2.
\end{equation*} 
\end{theorem}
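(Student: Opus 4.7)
The plan is to verify the hypothesis of the Taylor-type criterion Theorem~\ref{Theorem1.2Tebon}: for the resolvent equation $(i\lambda I-\mathcal{B})U=F$ with $U=(u,w,\psi,v)$ and $F=(f^{1},f^{2},f^{3},f^{4})$, it is enough to establish
\[
|\lambda|^{\phi}\,\|U\|_{\mathcal{H}}^{2}\le C\,\|F\|_{\mathcal{H}}\|U\|_{\mathcal{H}}\qquad\text{for }|\lambda|\ge\delta,
\]
with $\phi$ as in the statement. Since $\|U\|_{\mathcal{H}}^{2}=\rho_{1}\|w\|^{2}+\rho_{2}\|v\|^{2}+\kappa\|u_{x}+\psi\|^{2}+b\|A^{1/2}\psi\|^{2}$, multiplying Lemma~\ref{Regularidad} by $|\lambda|^{\phi-1}$ — which is legitimate because $\phi<1$ on $(0,1)^{2}$ — absorbs the ``potential'' part of the energy into the error, and the whole theorem reduces to proving
\[
|\lambda|^{\phi}\|w\|^{2}+|\lambda|^{\phi}\|v\|^{2}\le C\,\|F\|_{\mathcal{H}}\|U\|_{\mathcal{H}}.
\]

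The strategy to produce the negative power $|\lambda|^{-\phi}$ on $\|w\|^{2}$ (and analogously on $\|v\|^{2}$) is two-scale interpolation. On the one hand, the dissipation identity \eqref{dis-10} yields the high-regularity bound $\|A^{\tau/2}w\|^{2}\le C\|F\|\|U\|$; on the other, I would derive a low-regularity bound $\|A^{-1/2}w\|\le C|\lambda|^{-1}(\|F\|\|U\|)^{1/2}$ by applying $A^{-1/2}$ to the resolvent equation \eqref{esp-20}, which gives
\[
i\lambda\,A^{-1/2}w=A^{-1/2}f^{2}+\tfrac{\kappa}{\rho_{1}}A^{-1/2}(u_{x}+\psi)_{x}-\tfrac{1}{\rho_{1}}A^{\tau-1/2}w.
\]
Each term on the right is of size $C(\|F\|\|U\|)^{1/2}$: $\|A^{-1/2}(u_{x}+\psi)_{x}\|\le\|u_{x}+\psi\|$ by a duality/integration-by-parts argument valid in the 1D Dirichlet setting, $\|A^{\tau-1/2}w\|\le C\|A^{\tau/2}w\|$ via the embedding $D(A^{\tau/2})\hookrightarrow D(A^{\tau-1/2})$ (since $\tau\le 1$), and $\|A^{-1/2}f^{2}\|\le\|F\|_{\mathcal{H}}$. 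Dividing by $|\lambda|$ produces the announced estimate; the same scheme on \eqref{esp-40} handles $\|A^{-1/2}v\|$, with the additional $A\psi$ term controlled through the bound $\|A^{1/2}\psi\|\le C(\|F\|\|U\|)^{1/2}$ extracted from \eqref{EstimaEquivalenteExp}.

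The concluding step is Lions' inequality (Theorem~\ref{Lions-Landau-Kolmogorov}) with $\alpha=-1/2$, $\beta=0$, $\gamma=\tau/2$ (and $\gamma=\sigma/2$ for $v$):
\[
\|w\|\le L\,\|A^{-1/2}w\|^{\tau/(\tau+1)}\,\|A^{\tau/2}w\|^{1/(\tau+1)},\qquad \|v\|\le L\,\|A^{-1/2}v\|^{\sigma/(\sigma+1)}\,\|A^{\sigma/2}v\|^{1/(\sigma+1)}.
\]
Squaring and inserting the two bounds produces exactly $\|w\|^{2}\le C|\lambda|^{-2\tau/(\tau+1)}\|F\|\|U\|$ and $\|v\|^{2}\le C|\lambda|^{-2\sigma/(\sigma+1)}\|F\|\|U\|$. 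Because $x\mapsto 2x/(x+1)$ is strictly increasing on $[0,1]$, setting $\phi=\min\{2\tau/(\tau+1),\,2\sigma/(\sigma+1)\}$ reproduces the piecewise formula in the statement and finishes the proof.

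The main obstacle I anticipate is not computational but structural: selecting the right fractional order to apply to the resolvent equation. More generally, applying $A^{-\beta}$ to \eqref{esp-20} would give an interpolation exponent of $2\tau/(\tau+2\beta)$, which is decreasing in $\beta$; one is therefore tempted to take $\beta$ as small as possible, but $\beta$ must satisfy both $\beta\ge 1/2$ (to absorb the derivative inside $(u_{x}+\psi)_{x}$ through duality) and $\beta\ge\tau/2$ (to use the dissipation on $A^{\tau-\beta}w$). Both constraints are simultaneously saturated at $\beta=1/2$, which yields the optimal exponent $2\tau/(\tau+1)$ appearing in the statement; anything coarser (say $\beta=1$) would only deliver the weaker exponent $2\tau/(\tau+2)$ and would miss the stated Gevrey class.
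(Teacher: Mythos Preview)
Your approach is correct and arrives at the same interpolation exponent $\phi=\min\{2\tau/(\tau+1),2\sigma/(\sigma+1)\}$ as the paper, but by a more direct route. The paper does \emph{not} apply $A^{-1/2}$ straight to \eqref{esp-20}; instead it borrows the Liu--Renardy device and splits $w=w_1+w_2$, where $w_1$ solves the auxiliary parabolic problem $i\lambda w_1+Aw_1=f^2$ (so that $|\lambda|\|w_1\|+|\lambda|^{1/2}\|A^{1/2}w_1\|+\|Aw_1\|\le C\|F\|$), and then estimates $\|A^{-1/2}w_2\|$ and $\|A^{\tau/2}w_2\|$ before interpolating on $w_2$ alone (and similarly $v=v_1+v_2$). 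Your method collapses this two-step decomposition into a single application of $A^{-1/2}$ to the full resolvent equation and interpolates directly on $w$, which is shorter and avoids the auxiliary unknowns; the paper's decomposition, on the other hand, isolates the data term $f^2$ cleanly and is the standard tool when the forcing has less regularity than the solution pieces one already controls. Both routes culminate in Lions' interpolation with the endpoints $-\tfrac12$ and $\tfrac{\tau}{2}$ (respectively $\tfrac{\sigma}{2}$) and then feed into Lemma~\ref{Regularidad}, so the essential mechanism is identical.

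One small bookkeeping point: you assert that ``each term on the right is of size $C(\|F\|\,\|U\|)^{1/2}$'', but the contribution $\|A^{-1/2}f^{2}\|$ is only bounded by $\|F\|$, not by $(\|F\|\,\|U\|)^{1/2}$. This is harmless: carrying the extra $\|F\|$ through the interpolation produces an additional $C|\lambda|^{-\phi}\|F\|^{2}$ on the right of the final inequality, and the resulting bound $|\lambda|^{\phi}\|U\|^{2}\le C\|F\|\,\|U\|+C|\lambda|^{-\phi}\|F\|^{2}$ still yields $|\lambda|^{\phi}\|U\|\le C'\|F\|$ by an elementary quadratic argument in $r=\|U\|/\|F\|$. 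The paper's own estimates \eqref{Eq118AnalyRR}--\eqref{Eq021Gevrey} carry exactly the same $\|F\|^{2}$ remainder, so this is not a discrepancy between the two proofs, only a place where your write-up should be slightly more precise.
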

\begin{proof}
From the resolvent  equation $F=(i\lambda I-\mathcal{B})U$  for $\lambda\in \mathbb{R}$,  we have
$U=(i\lambda I-\mathcal{B})^{-1}F$.  Furthermore to show  \eqref{Eq1.5Tebon2020} this is   Theorem \ref{Theorem1.2Tebon} it is enough to show:
%%%%%%%
\begin{equation}\label{EstimaEquivalenteGevrey}
\dfrac{|\lambda|^\phi \|(i\lambda-\mathcal{B})^{-1}F\|_\mathcal{H}}{\|F\|_\mathcal{H}}\leq C  \Longleftrightarrow |\lambda |^\phi\|U\|_\mathcal{H}^2\leq C_\delta\|F\|_\mathcal{H}\|U\|_\mathcal{H},
\end{equation}
$\quad\rm{for}\quad \phi=\left\{ \begin{array}{ccc}
\dfrac{2\sigma}{\sigma+1} & {\rm for} & \sigma\leq\tau,\\\\
\dfrac{2\tau}{\tau+1}  &{\rm for}  & \tau\leq \sigma. \quad
\end{array}\right.\quad {\rm and}\quad (\tau,\sigma)\in R_{CG}:=(0,1)^2$.

%%%%%%%%%
Next, we will estimate $|\lambda|^\frac{2\tau}{\tau+1}\|w\|^2$ and $|\lambda|^\frac{2\sigma}{\sigma+1}\|v\|^2$.  \\\\
%%%%%%%%%%%%%%%
%%%%%%%%%%%%%%%%%%%%%%%%%%%%%%%%%%%%%%%
%%%%%%%%%%%  Estimativa de $|\lambda|\|w\|^2$    %%%%%%%%%%
%%%%%%%%%%%%%%%%%%%%%%%%%%%%%%%%%%%%%%
%%
Let's start by estimating the term $|\lambda|^\frac{2\tau}{\tau+1}\|w\|^2$.  We assume $\lambda\in\R$ with  $|\lambda|>1$, we shall borrow some ideas from \cite{LiuR95}.  Set $w=w_1+w_2$, where $w_1\in D(A)$ and $w_2\in D(A^0)$, with 
\begin{multline}\label{Eq110AnalyRR}
i\lambda w_1+A w_1=f^2, 
\hspace{1cm} i\lambda w_2=-\dfrac{\kappa}{\rho_1}Au+\dfrac{\kappa}{\rho_1}\psi_x-\dfrac{1}{\rho_1}A^\tau w+Aw_1.
\end{multline} 
Firstly,  applying the product duality  the first equation in \eqref{Eq110AnalyRR} by $w_1$,    then by $Aw_1$ and recalling that the operator $A$  is
self-adjoint, we have 
\begin{equation}\label{Eq112AnalyRR}
|\lambda|\|w_1\|+|\lambda|^\frac{1}{2}\|A^\frac{1}{2}w_1\|+\|Aw_1\|\leq C\|F\|_\mathcal{H}.
\end{equation}
In follows from the second equation in \eqref{Eq110AnalyRR} that
\begin{equation*}
i\lambda A^{-\frac{1}{2}}w_2= -\dfrac{\kappa}{\rho_1}A^\frac{1}{2}u+\dfrac{\kappa}{\rho_1}A^{-\frac{1}{2}}\psi_x-\dfrac{1}{\rho_1}A^{\tau-\frac{1}{2}} w+A^\frac{1}{2}w_1,
\end{equation*}
 then,  as $\|A^{-\frac{1}{2}}\psi_x\|^2=\dual{-A^{-{\frac{1}{2}}}\psi_{xx}}{A^{-\frac{1}{2}}\psi}=\|\psi\|^2$,   $\tau-\frac{1}{2}\leq \frac{\tau}{2}$ and  $0\leq \frac{1}{2}$,  taking into account the continuous embedding $D(A^{\theta_2}) \hookrightarrow D(A^{\theta_1}),\;\theta_2>\theta_1$,  we have
\begin{equation}\label{Eq113AAnaly}
|\lambda|^2 \|A^{-\frac{1}{2}} w_2\|^2\leq C\{ \|A^\frac{1}{2}u\|^2+\|A^\frac{1}{2}\psi\|^2+\|A^\frac{\tau}{2}w\|^2\}+\|A^\frac{1}{2}w_1\|^2.
\end{equation}
Using the exponential decay estimates \eqref{EstimaEquivalenteExp} and   estimative  \eqref{Eq112AnalyRR},  considering  $|\lambda|>1$ as $-1<-\frac{2\tau}{\tau+1}$, we have
\begin{multline*}
|\lambda|^2\|A^{-\frac{1}{2}}w_2\|^2 \leq   C\|F\|_\mathcal{H}\|U\|_\mathcal{H}+|\lambda|^{-1}\|F\|^2_\mathcal{H}\\
\leq   C|\lambda|^{-\frac{2\tau}{\tau+1}}\{|\lambda|^\frac{2\tau}{\tau+1}\|F\|_\mathcal{H}\|U\|_\mathcal{H}+\|F\|^2_\mathcal{H}\}.
\end{multline*}
Then
\begin{equation}
\label{Eq113AnalyRR}
\|A^{-\frac{1}{2}}w_2\|^2\leq C_\delta |\lambda|^{-\frac{4\tau+2}{\tau+1}}\{|\lambda|^\frac{2\tau}{\tau+1}\|F\|_\mathcal{H}\|U\|_\mathcal{H}+\|F\|^2_\mathcal{H}\}.
\end{equation}
On the  other hand, from $w_2=w-w_1$,  \eqref{dis-10} and  as $\frac{\tau}{2}\leq\frac{1}{2}$ the inequality  \eqref{Eq112AnalyRR}, we  have 
\begin{equation}\label{Eq114AnalyRR}
\|A^\frac{\tau}{2} w_2\|^2\leq  \|A^\frac{\tau}{2} w\|^2+\|A^\frac{\tau}{2}w_1\|^2
\leq  C|\lambda|^{-\frac{2\tau}{\tau+1}}\{|\lambda|^\frac{2\tau}{\tau+1}\|F\|_\mathcal{H}\|U\|_\mathcal{H}+\|F\|^2_\mathcal{H}\}.
\end{equation}
Now,  by Lions' interpolations inequality for $0\in [-\frac{1}{2}, \frac{\tau}{2}]$, we derive
\begin{equation}\label{Eq115AnalyRR}
 \|w_2\|^2 \leq C(\|A^{-\frac{1}{2}}w_2\|^2)^\frac{\tau}{\tau+1}(\|A^\frac{\tau}{2}w_2\|^2)^\frac{1}{\tau+1}.
\end{equation}
%%%%%%%%%%  Página 20
Using \eqref{Eq113AnalyRR} and  \eqref{Eq114AnalyRR} in \eqref{Eq115AnalyRR}, we have
\begin{equation}\label{Eq118AnalyRR}
\|w_2\|^2 \leq C_\delta |\lambda|^{-\frac{4\tau}{\tau+1}}\{|\lambda|^\frac{2\tau}{\tau+1}\|F\|_\mathcal{H}\|U\|_\mathcal{H}+\|F\|^2_\mathcal{H}\}\quad{\rm for }\quad 0\leq\tau\leq 1.
\end{equation}
Now,   as $|\lambda|\|w\|^2\leq |\lambda| \|w_1\|^2+ |\lambda| \|w_2\|^2$, estimates \eqref{Eq112AnalyRR} and \eqref{Eq118AnalyRR} and as $-2\leq -\frac{4\tau}{\tau+1}$, we get
\begin{equation}\label{Eq119AnalyRR}
|\lambda|\|w\|^2\leq C|\lambda|^\frac{1-3\tau}{\tau+1}\{ |\lambda|^\frac{2\tau}{\tau+1}\|F\|_\mathcal{H}\|U\|_\mathcal{H}+\|F\|^2_\mathcal{H}\}\qquad\rm{for}\qquad 0\leq\tau\leq 1.
\end{equation}
%%%%%%%%
%%%%%%%%%%%%%%%%%%%%%%%%%%%%%%%%%%%%%%%
%%%%%%%%%%%  Estimativa de $|\lambda|\|w\|^2$    %%%%%%%%%%
%%%%%%%%%%%%%%%%%%%%%%%%%%%%%%%%%%%%%%%
Finally,  let's now estimate the missing term  $|\lambda|\|v\|^2$,  we assume  $|\lambda|>1$.  Set $v=v_1+v_2$, where $v_1\in D(A)$ and $v_2\in D(A^0)$, with 
\begin{multline}\label{Eq110AnalyRRW}
i\lambda v_1+A v_1=f^4 
\hspace{1cm} i\lambda v_2=-\dfrac{b}{\rho_2}A\psi -\dfrac{\kappa}{\rho_2}u_x-\dfrac{\kappa}{\rho_2}\psi-\dfrac{1}{\rho_2}A^\sigma v+Av_1.
\end{multline} 
Firstly,  applying the product duality  the first equation in \eqref{Eq110AnalyRRW} by $v_1$,  then by $Av_1$ and recalling that the operator $A$ is self-adjoint, we have 
\begin{equation}\label{Eq112AnalyRRW}
|\lambda|\|v_1\|+|\lambda|^\frac{1}{2}\|A^\frac{1}{2}v_1\|+\|Av_1\| \leq C \|F\|_\mathcal{H}.
\end{equation}
%%%%%%%%
 In follows from the second equation in \eqref{Eq110AnalyRRW},  that
\begin{equation*}
i\lambda A^{-\frac{1}{2}}v_2= -\dfrac{b}{\rho_2}A^\frac{1}{2}\psi-\dfrac{\kappa}{\rho_2}A^{-\frac{1}{2}}u_x-\dfrac{\kappa}{\rho_2}A^{-\frac{1}{2}}\psi-\dfrac{1}{\rho_2}A^{\sigma-\frac{1}{2}}v+A^\frac{1}{2}v_1,
\end{equation*}
 then,  as $\|A^{-\frac{1}{2}}u_x\|^2=\dual{-A^{-\frac{1}{2}}u_{xx}}{A^{-\frac{1}{2}}u}=\|u\|^2$,    $-\frac{1}{2}\leq \frac{1}{2}$  and $\sigma-\frac{1}{2}\leq\frac{\sigma}{2}\leq\frac{1}{2}$, taking into account the continuous embedding $D(A^{\theta_2}) \hookrightarrow D(A^{\theta_1}),\;\theta_2>\theta_1$ and using \eqref{Eq112AnalyRRW} and $-1\leq-\frac{2\sigma}{\sigma+1}$, we have
\begin{equation*}
|\lambda|^2 \|A^{-\frac{1}{2}} v_2\|^2 \leq 
 C_\delta |\lambda|^{-\frac{2\sigma}{\sigma+1}} \{|\lambda|^\frac{2\sigma}{\sigma+1}\|F\|_\mathcal{H}\|U\|_\mathcal{H}+\|F\|_\mathcal{H}^2\},
\end{equation*}
then
\begin{equation}
\label{Eq113AAnalyRRW}
 \|A^{-\frac{1}{2}} v_2\|^2\leq  C_\delta |\lambda|^{-\frac{4\sigma+2}{\sigma+1}} \{|\lambda|^\frac{2\sigma}{\sigma+1}\|F\|_\mathcal{H}\|U\|_\mathcal{H}+\|F\|_\mathcal{H}^2\}\quad {\rm for } \quad 0\leq\sigma\leq 1.
\end{equation}
On the  other hand, from $v_2=v-v_1$,  \eqref{dis-10} and  as $\frac{\sigma}{2}\leq\frac{1}{2}$ the inequality  \eqref{Eq112AnalyRRW}, we  have 
\begin{equation}\label{Eq114AnalyRRW}
\|A^\frac{\sigma}{2} v_2\|^2\leq  C\{\|A^\frac{\sigma}{2} v\|^2+\|A^\frac{\sigma}{2}v_1\|^2\}
\leq  C_\delta|\lambda|^{-\frac{2\sigma}{\sigma+1}}\{|\lambda|^\frac{2\sigma}{\sigma+1} \|F\|_\mathcal{H}\|U\|_\mathcal{H}+\|F\|^2_\mathcal{H}\}.
\end{equation}
Now,  by Lions' interpolations inequality Theorem \ref{Lions-Landau-Kolmogorov} for $0\in [-\frac{1}{2},\frac{\sigma}{2}]$, we derive
\begin{equation}\label{Eq115AnalyRRW}
 \|v_2\|^2\leq C(\|A^{-\frac{1}{2}}v_2\|^2)^\frac{\sigma}{\sigma+1} (\|A^\frac{\sigma}{2}v_2\|^2)^\frac{1}{\sigma+1}.
\end{equation}
%%%%%%%%%%%%%%%%%%
Using   estimates \eqref{Eq113AAnalyRRW} and \eqref{Eq114AnalyRRW} in \eqref{Eq115AnalyRRW}, we have
\begin{equation}\label{Eq118AnalyRRW}
\|v_2\|^2\leq C|\lambda|^{-\frac{4\sigma}{\sigma+1}}\{|\lambda|^\frac{2\sigma}{\sigma+1} \|F\|_\mathcal{H}\|U\|_\mathcal{H}+\|F\|^2_\mathcal{H}\}.
\end{equation}
On the other hand,   as $|\lambda|\|v\|^2\leq C\{|\lambda| \|v_1\|^2+ |\lambda| \|v_2\|^2\}$,  from inequality \eqref{Eq112AnalyRRW} ($|\lambda|\|v_1\|^2\leq |\lambda|^{-1}\|F\|^2_\mathcal{H}$), \eqref{Eq118AnalyRRW} and as $-2\leq -\frac{4\sigma}{\sigma+1}$, we have
\begin{equation}\label{Eq119AnalyRRW}
|\lambda|\|v\|^2\leq C|\lambda|^{1-2\sigma}\{ |\lambda|^\sigma \|F\|_\mathcal{H}\|U\|_\mathcal{H}+\|F\|^2_\mathcal{H}\}\quad\rm{for}\quad 0\leq\sigma\leq 1.
\end{equation}
%%%%%%%%%%%%%%%%%%%%
%%%%%%%%%%%%%%%%%%%%
Finally using the estimates  \eqref{Eq119AnalyRR}   and \eqref{Eq119AnalyRRW}  in the inequality of Lemma \ref{Regularidad},   we arrive at the
\begin{multline}\label{Eq020Gevrey}
|\lambda|[\kappa\|u_x+\psi\|^2+b\|A^\frac{1}{2}\psi\|^2]
 \leq
C\bigg[|\lambda|^\frac{1-3\tau}{\tau+1} \{|\lambda|^\frac{2\tau}{\tau+1}\|F\|_\mathcal{H}\|U\|_\mathcal{H}+\|F\|^2_\mathcal{H}\} \\
+|\lambda|^\frac{1-3\sigma}{\sigma+1}\big\{|\lambda|^\frac{2\sigma}{\sigma+1} \|F\|_\mathcal{H}\|U\|_\mathcal{H} +\|F\|^2_\mathcal{H}\big \}  \bigg]
+C_\delta\|F\|_\mathcal{H}\|U\|_\mathcal{H}. 
\end{multline}
Therefore
\begin{multline}\label{Eq021Gevrey}
|\lambda|  [\kappa\|u_x+\psi\|^2+b\|A^\frac{1}{2}\psi\|^2]
 \\
 \leq  C_\delta\left\{ \begin{array}{ccc}
(i)\quad |\lambda|^\frac{1-3\tau}{\tau+1}\{|\lambda|^\frac{2\tau}{\tau+1}\|F\|_\mathcal{H}\|U\|_\mathcal{H}+\|F\|^2_\mathcal{H}\}& {\rm for} & \tau\leq\sigma,\\\\
(ii)\quad  |\lambda|^\frac{1-3\sigma}{\sigma+1}\{|\lambda|^\frac{2\sigma}{\sigma+1} \|F\|_\mathcal{H}\|U\|_\mathcal{H}+\|F\|^2_\mathcal{H}\}& {\rm for} &  \sigma\leq\tau.
\end{array} \right.  
\end{multline}
Finally  from estimates\eqref{Eq119AnalyRR},  \eqref{Eq119AnalyRRW} and \eqref{Eq021Gevrey}. We finish the proof of this theorem.

\end{proof}
%%%%%%%%%%%%%%%%
%%%%%%%%%%%%%%%%
\subsection{Analyticity  of $S(t)=e^{\mathcal{B}t}$ for $(\tau,\sigma)\in \big[\frac{1}{2},  1\big]^2$}

\begin{theorem}[see \cite{LiuZ}]\label{LiuZAnaliticity}
	Let $S(t)=e^{\mathcal{B}t}$ be $C_0$-semigroup of contractions  on a Hilbert space $ \mathcal{H}$. Suppose that
	\begin{equation}\label{EixoImaginary}
	\rho(\mathcal{B})\supseteq\{ i\lambda/ \lambda\in \R \} 	\equiv i\R
	\end{equation}
	 Then $S(t)$ is analytic if and only if
	\begin{equation}\label{Analiticity}
	 \limsup\limits_{|\lambda|\to
		\infty}
	\|\lambda(i\lambda I-\mathcal{B})^{-1}\|_{\mathcal{L}( \mathcal{H})}<\infty
	\end{equation}
	holds.
\end{theorem}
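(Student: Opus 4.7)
The statement is the Liu--Zheng characterization of analyticity, which is an ``if and only if'', so my plan is to treat the two directions separately. The easier direction is necessity; the sufficiency direction is the one that does the real work and is where essentially all technical effort goes.

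For the necessity direction, assume $S(t)=e^{\mathcal B t}$ is analytic. A standard fact (e.g.\ Pazy, Thm.~2.5.2) is that an analytic $C_0$-semigroup extends holomorphically and boundedly into a sector $\Sigma_\delta=\{t\in\mathbb C\setminus\{0\}:|\arg t|<\delta\}$ for some $\delta\in(0,\pi/2)$. Using the Laplace transform representation
\[
(\lambda I-\mathcal B)^{-1}=\int_0^\infty e^{-\lambda t}S(t)\,dt
\]
and deforming the contour into $\Sigma_\delta$, one obtains an extension of the resolvent to the sector $\Sigma_{\pi/2+\delta}$ with the bound $\|(\lambda I-\mathcal B)^{-1}\|\le M/|\lambda|$. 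Specializing to $\lambda=i\mu$ with $\mu\in\mathbb R$ gives $\|\mu(i\mu I-\mathcal B)^{-1}\|\le M$, which is precisely \eqref{Analiticity}.

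For the sufficiency direction, suppose $\rho(\mathcal B)\supseteq i\mathbb R$ and $\|(i\lambda I-\mathcal B)^{-1}\|\le C/|\lambda|$ for $|\lambda|\ge \lambda_0$. The plan is to show that $\rho(\mathcal B)$ contains a sector $\Sigma_{\pi/2+\delta}$ about the positive real axis, with $\|(zI-\mathcal B)^{-1}\|\le M/|z|$ there, which by Pazy's characterization forces $\mathcal B$ to generate an analytic semigroup. The contraction hypothesis already gives $\{\Re z>0\}\subset \rho(\mathcal B)$ with $\|(zI-\mathcal B)^{-1}\|\le 1/\Re z$. To cover the missing ``wedge'' adjacent to the imaginary axis, I would use the Neumann-series perturbation around $i\lambda$:
\[
(zI-\mathcal B)^{-1}=(i\lambda I-\mathcal B)^{-1}\sum_{n=0}^\infty\bigl[(i\lambda-z)(i\lambda I-\mathcal B)^{-1}\bigr]^{n},
\]
which converges as soon as $|z-i\lambda|\,\|(i\lambda I-\mathcal B)^{-1}\|<1$, i.e.\ $|z-i\lambda|<|\lambda|/C$. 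Letting $\lambda$ sweep the imaginary axis, the union of these disks together with the right half-plane forms a sector of opening angle greater than $\pi/2$ about the positive real axis. Inserting the Neumann sum into the geometric estimate $\|(zI-\mathcal B)^{-1}\|\le \|(i\lambda I-\mathcal B)^{-1}\|/(1-|z-i\lambda|\,\|(i\lambda I-\mathcal B)^{-1}\|)$ and choosing $\lambda=\Im z$ yields the required $M/|z|$ bound.

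The main obstacle is the bookkeeping of constants uniformly in $\lambda$: one must choose the radius of the perturbation disks as a fixed fraction of $|\lambda|$, verify that the resulting Neumann bound is of the form $M/|z|$ (not merely $M$), and patch the resolvent estimate across the boundary between the ``perturbed-imaginary-axis'' region and the right half-plane so that the combined bound holds on a genuine sector $\Sigma_{\pi/2+\delta}$. Once this sectorial resolvent estimate is in hand, the Dunford--Cauchy integral $S(t)=\frac{1}{2\pi i}\int_\Gamma e^{zt}(zI-\mathcal B)^{-1}dz$ over a suitable contour $\Gamma\subset\Sigma_{\pi/2+\delta}$ produces the analytic extension of $S(t)$ to a complex sector, completing the proof.
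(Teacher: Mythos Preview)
The paper does not prove this theorem at all: it is quoted verbatim as a known characterization from Liu--Zheng \cite{LiuZ} and used as a black box, with no proof provided. So there is no ``paper's own proof'' to compare against.

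That said, your sketch is the standard argument one finds in the literature (Pazy, Engel--Nagel, or Liu--Zheng themselves): necessity via the sectorial resolvent bound that comes for free with an analytic semigroup, and sufficiency via a Neumann-series continuation of the resolvent from the imaginary axis into a small sector past it, followed by the Dunford integral. The outline is correct; the only places requiring care are exactly the ones you flag --- uniformity of the $M/|z|$ bound across the patched regions, and handling the bounded portion of the imaginary axis (where the hypothesis only gives a bound for $|\lambda|\ge\lambda_0$; compactness of $[-\lambda_0,\lambda_0]$ and the assumption $i\mathbb R\subset\rho(\mathcal B)$ fill this gap). None of this is a genuine obstacle, and your proposal would go through.
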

Before proving the main result of this section, we will prove the following lemma.
\begin{lemma}\label{Lema001Analiticity}
Let $\delta> 0$. There exists a constant $C_\delta > 0$ such that the solutions of \eqref{Eq05A}--\eqref{Eq07A}
for $|\lambda|\geq  \delta$  satisfy the inequality
\begin{eqnarray}\label{Eq004Lema02A}
(i)\quad |\lambda|\|w\|^2\leq  C_\delta\|F\|_\mathcal{H}\|U\|_\mathcal{H}\qquad{\rm for}\qquad \frac{1}{2}\leq\tau\leq 1.\\
\label{Eq005Lema02A}
(ii)\quad |\lambda|\|v\|^2\leq  C_\delta\|F\|_\mathcal{H}\|U\|_\mathcal{H}\qquad{\rm for}\qquad \frac{1}{2}\leq\sigma\leq 1.
\end{eqnarray}
\end{lemma}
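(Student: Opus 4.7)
The proof of Lemma~\ref{Lema001Analiticity} would follow the strategy of the Gevrey proof of Theorem~\ref{GevreyLaminado}, sharpened to exploit the hypothesis $\tau,\sigma\in[1/2,1]$. By the symmetric structure of the equations \eqref{esp-20} and \eqref{esp-40}, it suffices to prove part (i); part (ii) then follows by the exchange $w\leftrightarrow v$, $\tau\leftrightarrow\sigma$, and the corresponding swap of resolvent equations.

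I would begin by reusing the decomposition $w=w_1+w_2$ introduced in the Gevrey proof, with $w_1\in D(A)$ defined by $i\lambda w_1+Aw_1=f^2$. The standard elliptic bounds from \eqref{Eq112AnalyRR} give $|\lambda|\|w_1\|^2\leq C|\lambda|^{-1}\|F\|_{\mathcal{H}}^2$, which is well-behaved for $|\lambda|\geq\delta$. Then $w_2=w-w_1$ satisfies the equation displayed in \eqref{Eq110AnalyRR}, from which one can derive $|\lambda|^2\|A^{-1/2}w_2\|^2\leq C\|F\|_{\mathcal{H}}\|U\|_{\mathcal{H}}+C|\lambda|^{-1}\|F\|_{\mathcal{H}}^2$ using the elastic estimate \eqref{Exponential002}, the dissipation \eqref{dis-10}, and the $w_1$ bounds, exactly as in the Gevrey argument.

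The essential novelty, where the restriction $\tau\geq 1/2$ enters, is a refined estimate that converts the above into a full bound on $|\lambda|\|w\|^2$. My plan is to apply $A^{-\tau/2}$ to \eqref{esp-20}, take the inner product with $A^{-\tau/2}w$, and integrate by parts in the $(u_x+\psi)_x$ term. Because $\tau\geq 1/2$, the resulting exponent $1/2-\tau$ is nonpositive, so $A^{1/2-\tau}$ is a bounded operator on $L^2$ and $\|A^{1/2-\tau}w\|\leq C\|w\|\leq C\|U\|_{\mathcal{H}}$; extracting the imaginary part of the identity and using $\|u_x+\psi\|\leq C\sqrt{\|F\|_{\mathcal{H}}\|U\|_{\mathcal{H}}}$ from \eqref{Exponential002} then yields $|\lambda|\|A^{-\tau/2}w\|^2 \leq C_\delta\|F\|_{\mathcal{H}}\|U\|_{\mathcal{H}}$. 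Combining this with the dissipation bound $\|A^{\tau/2}w\|^2\leq C\|F\|_{\mathcal{H}}\|U\|_{\mathcal{H}}$ via Lions' interpolation (Theorem~\ref{Lions-Landau-Kolmogorov}) between $D(A^{-\tau/2})$ and $D(A^{\tau/2})$ at the midpoint~$0$, together with a Young-type argument, should deliver the claimed bound $|\lambda|\|w\|^2 \leq C_\delta\|F\|_{\mathcal{H}}\|U\|_{\mathcal{H}}$.

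The principal obstacle I anticipate is the bookkeeping of the $|\lambda|$-exponents across the interpolation step. The Gevrey-style pivot $D(A^{-1/2})$ produces a residual factor $|\lambda|^{(1-\tau)/(\tau+1)}$ that vanishes only at $\tau=1$, so the sharper pivot $D(A^{-\tau/2})$, which is meaningful precisely because $\tau\geq 1/2$, is essential for closing the estimate uniformly on $[1/2,1]$. A secondary concern is the absorption of leftover $\|F\|_{\mathcal{H}}^2$ terms, which in general dominate $\|F\|_{\mathcal{H}}\|U\|_{\mathcal{H}}$; handling this will likely require either the contraction bound $\|U\|_{\mathcal{H}}\leq C_\delta\|F\|_{\mathcal{H}}$ from \eqref{EstimaEquivalenteExp} or a bootstrapping step that multiplies through by a suitable power of $|\lambda|$ and solves the resulting quadratic inequality in $|\lambda|\|U\|_{\mathcal{H}}/\|F\|_{\mathcal{H}}$, in the spirit of the absorption implicit in the Gevrey proof.
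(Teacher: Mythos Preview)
Your plan has a genuine gap in the $|\lambda|$-bookkeeping, and it is exactly the obstacle you anticipated but did not resolve. Taking the duality product of \eqref{esp-20} with $A^{-\tau}w$ and extracting the imaginary part indeed gives, for $\tau\ge 1/2$,
\[
|\lambda|\,\|A^{-\tau/2}w\|^2 \le C_\delta\|F\|_\mathcal{H}\|U\|_\mathcal{H},
\]
and the dissipation bound gives $\|A^{\tau/2}w\|^2\le C\|F\|_\mathcal{H}\|U\|_\mathcal{H}$. But Lions' interpolation at the midpoint $0\in[-\tau/2,\tau/2]$ yields
\[
|\lambda|\,\|w\|^2 \le C\,|\lambda|\,\|A^{-\tau/2}w\|\,\|A^{\tau/2}w\|
= C\,|\lambda|^{1/2}\big(|\lambda|\|A^{-\tau/2}w\|^2\big)^{1/2}\big(\|A^{\tau/2}w\|^2\big)^{1/2}
\le C\,|\lambda|^{1/2}\|F\|_\mathcal{H}\|U\|_\mathcal{H},
\]
so a residual factor $|\lambda|^{1/2}$ survives. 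The same loss occurs if you instead interpolate between $\|A^{-1/2}w_2\|$ and $\|A^{\tau/2}w_2\|$ using the Gevrey decomposition: the exponent $(\tau+1)/\tau$ exceeds $2$ for every $\tau<1$. No Young/bootstrap step repairs this, since the resulting inequality $|\lambda|\|U\|_\mathcal{H}^2\le C\|F\|_\mathcal{H}^2+C\|F\|_\mathcal{H}\|U\|_\mathcal{H}$ only produces $|\lambda|^{1/2}\|U\|_\mathcal{H}\le C\|F\|_\mathcal{H}$, i.e.\ Gevrey class $\nu>2$, not analyticity.

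The paper avoids interpolation altogether. The key multiplier is $\rho_1\,\lambda\,A^{-\tau}w$ (with the factor $\lambda$ included): the damping term then produces $\dual{A^\tau w}{\lambda A^{-\tau}w}=\lambda\|w\|^2$ directly on the left. The price is that the remaining terms carry explicit factors $\lambda u$, $\lambda\psi$, $\lambda w$; these are removed \emph{algebraically} by substituting from the other resolvent equations \eqref{esp-10}, \eqref{esp-30}, \eqref{esp-20}. After taking real parts, one is left with cross terms such as $\dual{v}{A^{-\tau}w_x}$ and $\dual{A^{1/2}f^1}{A^{1/2-\tau}w}$, all of which are controlled by $C_\delta\|F\|_\mathcal{H}\|U\|_\mathcal{H}$ precisely because $\tau\ge 1/2$ makes $A^{1/2-\tau}$ and $A^{(1-2\tau)/2}$ bounded. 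This is the missing idea in your outline: put the $\lambda$ into the multiplier so that the damping, not the time derivative, generates $\lambda\|w\|^2$, and use the coupled resolvent system to kill the $\lambda$ elsewhere.
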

\begin{proof}
%%%
We will initially show the item $(i)$ of this lemma,  performing the duality product of \eqref{esp-20} for $\rho_1A^{-\tau}\lambda w$,  and recalling that the operator $A^\theta$ is self-adjoint for all $\theta\in\mathbb{R}$,  we obtain
\begin{eqnarray*}
\lambda\|w\|^2\hspace*{-0.3cm} &=& \kappa\dual{\lambda(-Au+\psi_x)}{A^{-\tau} w}+\rho_1\dual{f^2}{\lambda A^{-\tau}w}-i\rho_1\lambda^2\|A^{-\frac{\tau}{2}}w\|^2\\
&=&i\kappa\dual{(Aw+Af^1)}{A^{-\tau}w} -i\kappa\dual{v_x+f^3_x}{A^{-\tau}w}\\
& & +\dual{f^2}{i\kappa A^{1-\tau}u-i\kappa A^{-\tau}\psi_x+iw-i\rho_1 A^{-\tau}f_2}-i\rho_1\lambda^2\|A^{-\frac{\theta}{2}}w\|^2\\
&=&\hspace*{-0.3cm}   i\kappa\|A^\frac{1-\tau}{2} w\|^2+i\kappa\dual{A^\frac{1}{2}f^1}{A^{\frac{1}{2}-\tau}w}+i\kappa\dual{v}{A^{-\tau}w_x}-i\kappa\dual{f_x^3}{A^{-\tau}w}-i\dual{f^2}{w}\\
& & +i\kappa\dual{f^2}{A^{1-\tau}u} +i\kappa\dual{A^{-\tau}f^2}{\psi_x}  -i\rho_1\|A^{-\frac{\tau}{2}}f^2\|^2-i\rho_1\lambda^2\|A^{-\frac{\theta}{2}}w\|^2.
\end{eqnarray*}
Noting that: $\|A^{-\tau} w_x\|^2=\dual{A^{-\tau}w_x}{A^{-\tau}w_x}=\dual{-A^{-\tau}w_{xx}}{A^{-\tau}w}=\|A^\frac{1-2\tau}{2} w\|^2$,  taking real part  and considering that  $\frac{1}{2}\leq\tau\leq 1$  using estimative \eqref{dis-10} and using Cauchy-Schwarz and Young  inequalities,  for $\varepsilon>0$ exists $C_\varepsilon$  such that
\begin{eqnarray*}
|\lambda|\|w\|^2 & \leq & C_\delta\|F\|_\mathcal{H}\|U\|_\mathcal{H}+\varepsilon\|A^{-\tau}w\|^2+C_\varepsilon\|v\|^2.
\end{eqnarray*}
As $0\leq\frac{\sigma}{2}$,  then from  estimative  \eqref{dis-10}  $\|v\|^2\leq C_\delta\|F\|_\mathcal{H}\|U\|_\mathcal{H}$.  From the continuous embedding for  $|\lambda|\geq 1$,  we finish the proof of item $(i)$ of this lemma.

%%%%%%%%%%%%%%%%%%%%%%%%%%%%%%%%%%%%%%
%%%%%%%%%%%%%%%%%%%%%%%%%%%%%%%%%%%%%%
 Again similarly,  performing the duality product of \eqref{esp-40} for $\rho_2A^{-\sigma}\lambda v$, using \eqref{esp-30}, and recalling the                                        self-adjointness of $A^\theta$,   $\theta \in\mathbb{R}$,  we obtain 
\begin{eqnarray*}%\label{Eq009Lema02A}
\lambda\|v\|^2 &=&   -b\dual{A\lambda \psi}{A^{-\sigma}v}-\kappa\dual{\lambda(u_x+\psi)}{A^{-\sigma} v} +\rho_2\dual{f^4}{\lambda A^{-\sigma}v}-i\lambda^2\|A^{-\frac{\sigma}{2}}v\|^2\\
&=& ib\|A^\frac{1-\sigma}{2}v\|^2+ib\dual{A^\frac{1}{2}f^3}{A^{\frac{1}{2}-\sigma}v}-i\kappa\dual{A^{-\sigma} w_x}{v}-i\kappa\dual{f^1_x}{A^{-\sigma} v}\\
& & +i\kappa\|A^{-\frac{\sigma}{2}}v\|^2+i\kappa\dual{f^3}{A^{-\sigma}v}-ib\dual{f^4}{A^{1-\sigma}\psi}-i\kappa\dual{f^4}{A^{-\sigma} u_x}\\
& & -i\kappa\dual{f^4}{A^{-\sigma}\psi}-i\dual{f^4}{v} +   i\rho_2\|A^{-\frac{\sigma}{2}}f^4\|^2-i\lambda^2\|A^{-\frac{\sigma}{2}}{v}\|^2.
\end{eqnarray*}
Noting that: $\|A^\frac{1-2\sigma}{2}w\|^2=\|A^{-\sigma} w_x\|^2$, taking real part  and considering that  $\frac{1}{2}\leq\sigma\leq 1$  and using Cauchy-Schwarz and Young  inequalities,  for $\varepsilon>0$ exists $C_\varepsilon$  such that
\begin{eqnarray*}
|\lambda|\|v\|^2 & \leq & C_\delta\|F\|_\mathcal{H}\|U\|_\mathcal{H}+C_\varepsilon\|A^\frac{1-2\sigma}{2}w\|^2+\varepsilon\|v\|^2.
\end{eqnarray*}
As $-\frac{1-2\sigma}{2} \leq 0\Longleftrightarrow \frac{1}{2}\leq \sigma\leq 1$.   Considering  $|\lambda|\geq 1$ and using  estimative  \eqref{EstimaEquivalenteExp},   we finish the proof of item $(ii)$ of this lemma.

\end{proof}

The main result of this subsection is the following theorem
\begin{theorem}
The semigroup $S(t)=e^{\mathcal{B}t}$ associated to the system \eqref{Eq05A}--\eqref{Eq07A} is analytic when both parameters $\tau$ and $\sigma$ vary by the interval $[\frac{1}{2},1]$.
\end{theorem}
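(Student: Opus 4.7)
My plan is to invoke the Liu--Zheng characterization of analyticity (Theorem \ref{LiuZAnaliticity}), which reduces the problem to two tasks: (a) verifying that $i\mathbb{R}\subset\rho(\mathcal{B})$, and (b) proving the uniform resolvent bound $\limsup_{|\lambda|\to\infty}\|\lambda(i\lambda I-\mathcal{B})^{-1}\|_{\mathcal{L}(\mathcal{H})}<\infty$. Since $0\in\rho(\mathcal{B})$ has already been established in Section 2 and the embedding $\mathcal{D}(\mathcal{B})\hookrightarrow\mathcal{H}$ is compact, step (a) follows by a standard contradiction argument: any purely imaginary eigenvalue $i\lambda_0\ne 0$ would, via the dissipation identity \eqref{dis-10}, force $A^{\tau/2}w=0$ and $A^{\sigma/2}v=0$, hence $w\equiv v\equiv 0$, and then \eqref{esp-10}--\eqref{esp-40} quickly yield $U\equiv 0$.

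For step (b), the key is to show
\begin{equation*}
|\lambda|\,\|U\|_\mathcal{H}^2 \leq C_\delta\|F\|_\mathcal{H}\|U\|_\mathcal{H}\qquad \text{for }|\lambda|\geq\delta,
\end{equation*}
since this is equivalent to $\|\lambda(i\lambda I-\mathcal{B})^{-1}F\|_\mathcal{H}\leq C\|F\|_\mathcal{H}$. Expanding $\|U\|^2_\mathcal{H}$ into its four constituent terms, I will bound them in two groups. First, for the velocity terms $|\lambda|\,\|w\|^2$ and $|\lambda|\,\|v\|^2$, I simply invoke Lemma \ref{Lema001Analiticity}, which delivers exactly the desired bound precisely because $\tau,\sigma\geq 1/2$.

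Second, for the elastic terms $|\lambda|[\kappa\|u_x+\psi\|^2+b\|A^{1/2}\psi\|^2]$, I apply Lemma \ref{Regularidad}, which gives
\begin{equation*}
|\lambda|\bigl[\kappa\|u_x+\psi\|^2+b\|A^{1/2}\psi\|^2\bigr] \leq |\lambda|\bigl[\rho_1\|w\|^2+\rho_2\|v\|^2\bigr]+C_\delta\|F\|_\mathcal{H}\|U\|_\mathcal{H},
\end{equation*}
and then substitute the velocity estimates from Lemma \ref{Lema001Analiticity} into the right-hand side. Adding the two groups together produces the desired resolvent estimate and concludes the proof via Theorem \ref{LiuZAnaliticity}.

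The main obstacle has in fact already been overcome in Lemma \ref{Lema001Analiticity}, where the restriction $\tau,\sigma\geq 1/2$ enters essentially: the duality product with $A^{-\tau}\lambda w$ (respectively $A^{-\sigma}\lambda v$) produces a term $\|A^{(1-2\tau)/2}w\|^2$ (respectively $\|A^{(1-2\sigma)/2}w\|^2$), which can be absorbed by $\|A^{\tau/2}w\|^2$ from the dissipation only when $\tau\geq 1/2$ (and similarly for $\sigma$). Once those two pointwise-in-$|\lambda|$ estimates for $\|w\|$ and $\|v\|$ are in hand, the rest of the argument is a direct bookkeeping step using the identity from Lemma \ref{Regularidad}; no additional interpolation or splitting of $w$ and $v$ (as was required in the Gevrey proof) is needed in the range $R_A=[1/2,1]^2$.
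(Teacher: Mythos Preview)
Your proposal is correct and follows essentially the same route as the paper: invoke Theorem \ref{LiuZAnaliticity}, verify $i\mathbb{R}\subset\rho(\mathcal{B})$ via a compactness/eigenvalue argument, and obtain the resolvent bound $|\lambda|\|U\|_\mathcal{H}^2\leq C_\delta\|F\|_\mathcal{H}\|U\|_\mathcal{H}$ by combining Lemma \ref{Regularidad} with Lemma \ref{Lema001Analiticity}. The only cosmetic difference is in step (a): the paper runs a ``largest $\lambda_0$'' sequence argument using \eqref{dis-10} and \eqref{Exponential002}, whereas you argue directly that a nonzero eigenvector is impossible; both are standard and equivalent here.
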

\begin{proof}
We will prove this theorem using Theorem \eqref{LiuZAnaliticity},  so we must prove the conditions \eqref{EixoImaginary} and \eqref{Analiticity}.
%%%%%%%%%%%%%%%%%%%%%%%%%%%%%%%%%%%%%%
%%%%%%%%%%%%%%%%%%%%%%%%%%%%%%%%%%%%%%
%%%%%%%%%%%%%%%%%%%%%%%%%%%%%%%%%%%%%%
%%%          $i|R\subset \rho(\mathbb{B}$          %%%%%%%%%%%%%%
%%%%%%%%%%%%%%%%%%%%%%%%%%%%%%%%%%%%%%
%--------------***************************-----------------------

Let us first check the condition \eqref{EixoImaginary} the Theorem\eqref{LiuZAnaliticity}. It'is  prove that $i\R\subset\rho( \mathcal{B})$ by contradiction.  
Since $\mathcal{B}$   is the infinitesimal generator of a $C_0-$semigroup of contractions $S(t)$, $ t\geq 0$,  from Theorem \ref{THY},  $\mathcal{B}$ is a closed operator,  and $\mathcal{D}(\mathcal{B})$ has compact embedding into the energy space $\mathcal{H}$, the spectrum $\sigma(\mathcal{B})$ contains only eigenvalues. Suppose that $i\R\not\subset \rho( \mathcal{B})$. As $0\in\rho( \mathcal{B})$ and  $\rho( \mathcal{B})$ is open, we consider the highest positive number $\lambda_0$ such that the interval  $(-i\lambda_0,i\lambda_0)\subset\rho( \mathcal{B})$ then $i\lambda_0$ or $-i\lambda_0$ is an element of the spectrum $\sigma( \mathcal{B})$. We Suppose $i\lambda_0\in \sigma( \mathcal{B})$ (if $-i\lambda_0\in \sigma( \mathcal{B})$ the proceeding is similar). Then, for $0<\nu<\lambda_0$ there exist a sequence of real numbers $(\lambda_n)$, with $0<\nu\leq\lambda_n<\lambda_0$, $\lambda_n\to \lambda_0$, and a vector sequence  $U_n=(u_n,w_n,\psi_n,v_n)\in \mathcal{D}( \mathcal{B})$ with  unitary norms, such that
\begin{eqnarray*}
\|(i\lambda_n- \mathcal{B}) U_n\|_\mathcal{H}=\|F_n\|_\mathcal{H}\to 0,
\end{eqnarray*}
as $n\to \infty$. From \eqref{Exponential002} for $0\leq\tau\leq 1$ and $0\leq\sigma\leq 1$, we have 
\begin{eqnarray*}
\kappa\|u_{xn}+\psi_n\|^2+b\|A^\frac{1}{2}\psi_n\|^2&\leq& C_\delta\|F_n\|_\mathcal{H}\|U_n\|_\mathcal{H}.
\end{eqnarray*}
In addition to the estimative \eqref{dis-10} for $0\leq\tau\leq 1$ and $0\leq\sigma\leq 1$,  we have 
\begin{eqnarray*}
\dfrac{1}{\rho_1}\|A^\frac{\tau}{2}w_n\|^2+\dfrac{1}{\rho_2}\|A^\frac{\sigma}{2}v_n\|^2\leq \|F_n\|_\mathcal{H}\|U_n\|_\mathcal{H}.
\end{eqnarray*}
Consequently,   $\|U_n\|^2 \to 0.$
Therefore,  we have  $\|U_n\|_\mathcal{H}\to  0$ but this is absurd,  since $\|U_n\|_\mathcal{H}=1$ for all $n\in\N$. Thus, $i\R\subset \rho(\mathcal{B})$. This completes the proof of condition \eqref{EixoImaginary} of the Theorem\eqref{LiuZAnaliticity}.
 
 Finally let's prove the condition \eqref{Analiticity}, note that proving this condition is equivalent to showing, let $\delta>0$. There exists a constant $C_\delta > 0$ such that the solutions of \eqref{Eq05A}--\eqref{Eq07A}
for $|\lambda|\geq  \delta$  satisfy the inequality
 \begin{equation}\label{EquivAnaliticity}
 |\lambda|\|U\|^2_\mathcal{H}\leq C_\delta\|F\|_\mathcal{H}\|U\|_\mathcal{H}.
 \end{equation}
 It is not difficult to see that this inequality \eqref{EquivAnaliticity} follows from the inequalities of the Lemmas \ref{Regularidad} and \ref{Lema001Analiticity},    so the proof of this theorem is finished.
%%%%%%%%%%%%%%%%%%%%%%%%%%%%%%%%%%%%%%
\end{proof}

%%%%%%%%%%%%

\end{document}